\newcommand{\rd}{{\mathbb{R}^d}}
\newcommand{\rone}{\mathbb{R}}
\renewcommand{\Re}{\rone}
\renewcommand{\star}{\circledast}
\newcommand{\E}{\mathds{E}}
\renewcommand{\P}{\mathds{P}}
\newcommand{\prt}{\partial}
\renewcommand {\epsilon}{\varepsilon}
\newcommand{\DD}{\mathbb{D}}
\newcommand{\eps}{\varepsilon}
\theoremstyle{plain}
\newtheorem{thm}{Theorem}[section]
\newtheorem{prop}{Proposition}[section]
\newtheorem{cor}{Corollary}[section]
\newtheorem{lem}{Lemma}[section]
\theoremstyle{definition}
\theoremstyle{remark}
\newtheorem{rem}{Remark}[section]
\DeclareMathSymbol{\ophi}{\mathalpha}{letters}{"1E}
\renewcommand{\phi}{\varphi}
\newcommand{\be}{\begin{equation}}
\newcommand{\ee}{\end{equation}}
\newcommand{\ben}{\begin{equation*}}
\newcommand{\een}{\end{equation*}}
\newcommand{\ba}{\begin{aligned}}
\newcommand{\ea}{\end{aligned}}
\newfont{\cyrfnt}{wncyr10}
\def\J3{\cyrfnt{\rm \u{\cyrfnt I}}}
\def\j3{\cyrfnt{\rm \u{\cyrfnt i}}}
\definecolor{DarkGreen}{rgb}{0.1,0.7,0.3}   
\definecolor{DarkGreen}{rgb}{0.1,0.7,0.3}   
\numberwithin{equation}{section}
\begin{document}

\title{On weak uniqueness and distributional properties of a solution to an SDE with $\alpha$-stable noise}

\author{%
        \textsc{Alexei Kulik}%
    \thanks{Institute of Mathematics, NAS of Ukraine, 3, Tereshchenkivska str., 01601  Kiev, Ukraine,
    \texttt{kulik.alex.m@gmail.com}}
    }

\date{}

\maketitle

\begin{abstract}
    \noindent
For an SDE driven by a rotationally invariant $\alpha$-stable noise we prove weak uniqueness of the solution under the balance condition $\alpha+\gamma>1$, where $\gamma$ denotes the H\"older index of the drift coefficient. We prove existence and continuity of the transition probability density  of the corresponding Markov process and give a representation of this density with  an explicitly given ``principal part'', and a ``residual part''  which possesses an upper bound. Similar representation is also provided for
the derivative of the transition probability density w.r.t. the time variable.

    \medskip\noindent
    \emph{Keywords:} SDE,   martingale problem,   transition probability density,  parametrix  method, approximate fundamental solution, approximate harmonic function.

    \medskip\noindent
    \emph{MSC 2010:} Primary: 60J35. Secondary: 60J75, 35S05, 35S10, 47G30.
\end{abstract}

\section{Introduction}\label{s12}
In this paper we study the SDE
\be\label{SDE}
dX_t=b(X_t)\, dt+\sigma(X_{t-})\, dZ^{(\alpha)}_t,
\ee
 driven by a symmetric $\alpha$-stable process $Z^{(\alpha)}$ in $\Re^d$.  If $\alpha=2$, i.e. $Z^{(\alpha)}$ is a Brownian motion, it is well known that for \eqref{SDE} to have unique weak solution it is sufficient that $b$ is measurable and locally bounded and $\sigma$ is continuous and non-degenerate; see \cite{Stroock_Varad}, Chapter 7. In particular, it is a kind of a ``common knowledge'' that an SDE with H\"older continuous coefficients and non-degenerate diffusion coefficient possesses unique weak solution which, in addition, defines a time-homogeneous Markov process. This heuristically means that the Brownian noise possesses a kind of regularization feature: though the deterministic dynamics which corresponds to the drift part of \eqref{SDE} may fail to be  well defined, adding a non-degenerate diffusion part makes the entire stochastic dynamics well determined. One can expect that the same ``stochastic regularization'' effect should appear also in systems with more general L\'evy noises. However, even in a particular (but important) case of an SDE driven by an $\alpha$-stable noise substantially new phenomena may appear.

 Namely, if $\alpha<1$ and the H\"older index $\gamma$ of the drift coefficient $b(x)$ is positive but small, equation \eqref{SDE} may even fail to possess the weak uniqueness property. A natural example for this dates back to the paper \cite{TTW74}, where in the second part of Theorem 3.2 it is shown that for a one-dimensional SDE \eqref{SDE} with $\sigma(x)\equiv 1$, $b(x)=|x|^\gamma\mathrm{sign}\, x$,
 the minimal and the maximal (weak) solutions to \eqref{SDE} are different as soon as
 \be\label{non-balance}
\alpha+\gamma<1.
\ee
This example well illustrates the fact that  an SDE driven by an $\alpha$-stable process requires some specific tools, when compared with the diffusive one, to provide its solvability and to analyze the properties of the solution. The first step in this direction was made in   \cite{TTW74}, where the first part of Theorem 3.2 states the weak uniqueness  for a  one-dimensional SDE \eqref{SDE} with $\sigma\equiv 1$ and $\gamma$-H\"older continuous \emph{non-decreasing} drift coefficient under the additional condition
\be\label{balance}
\alpha+\gamma>1,
\ee
which in what follows we call the\emph{ balance condition}. The method of proof in \cite{TTW74} strongly relies both on the fact that $X$ is a gradient perturbation of $Z^{(\alpha)}$, and apparently can not be applied in a general setting.

In this paper we propose a method which makes it possible both to prove the weak uniqueness of the solution to \eqref{SDE} and to analyze  the properties of the transition probability density of the  corresponding Markov process. Our standing assumptions are that that the coefficient $\sigma$ is non-degenerate,  $b, \sigma$ are  H\"older continuous, and  the {balance condition} \eqref{balance} holds. We note that our  weak uniqueness result is quite sharp, since the only ``gap'' between \eqref{non-balance} and the balance condition \eqref{balance} is the ``critical case'' $\alpha+\gamma=1$. We postpone the study of the critical case for a further research; our conjecture is that the weak uniqueness in this case still holds true.  An important reference  concerning the transition probability density is the recent paper \cite{DF13}, where under the balance condition and the \emph{assumption}
of the weak uniqueness of the solution it was shown that the solution to SDE \eqref{SDE} possesses a distribution density and this density belongs to certain Besov space.  The class of L\'evy noises allowed in \cite{DF13} is wider than our $\alpha$-stable one; on the other hand, in this particularly important
case we give a much more detailed information about the transition probability density, especially about its small time  behavior. We also note that the weak uniqueness in the ``super-critical'' case $\alpha<1$ is a non-trivial  property which can not be derived from sufficient conditions available in the field so far; we postpone the detailed discussion  to Section \ref{over} below.

Let us explain the heuristics behind the balance condition. First, we note that for $\alpha\geq 1$ this   condition holds true for any H\"older continuous $b$, which makes this case similar to the diffusive one. This vaguely can be interpreted as follows: in this case, the stochastic part of the equation dominates the drift part. For $\alpha<1$ such a domination fails, and the balance condition \eqref{balance} reflects the necessity to cooperate somehow the partial regularity property of the drift with the regularization properties of the noise. Namely, if $b$ is $\gamma$-H\"older continuous, the ODE which corresponds to the deterministic part in \eqref{SDE} may fail to have the uniqueness properties, but it is still possible to bound the distance at time $t$ between any two solutions to the Cauchy problem with same initial conditions; see Section \ref{flows} below. This bound has the form $Ct^{1/(1-\gamma)}$, and the natural scale for the $\alpha$-stable process is $t^{1/\alpha}$. The balance condition  \eqref{balance}  is equivalent to $1/(1-\gamma)>1/\alpha$; that is, $Ct^{1/(1-\gamma)}\ll t^{1/\alpha}, t\to 0+$.
  Therefore  \eqref{balance} is essentially the condition for the noise to be ``intensive enough'' to make negligible an ``analytical uncertainty'' caused by the deterministic part of the equation.

Both our proof of the weak uniqueness and our estimates for the transition probability density are based on an analytical construction, which interprets the  transition probability density as a fundamental solution for the parabolic Cauchy problem associated with the (formal) generator of the Markov process $X$ defined by \eqref{SDE}, and provides this solution by means of a certain version of the \emph{parametrix method}. Some part  of this construction was developed recently in \cite{KK15}. In the modification of the parametrix method for the super-critical stable SDEs, proposed in  \cite{KK15}, Case \textbf{C}, the ``zero order approximation'' of the unknown fundamental solution combines the heat kernel for the stable part with the deterministic flow which corresponds to the drift term. The drift term  was supposed therein to be Lipschitz continuous, hence the respective deterministic flow was well defined. In the current paper we finalize this construction and develop the proper substitute for the  deterministic flow term, which is well defined for a H\"older continuous drift coefficient and still makes the entire parametrix construction operational. We also clarify the structure of the law of the solution to \eqref{SDE} in a small time. Namely, we show that the principal part the  probability density of $X_t$ with $X_0=x$ can be chosen as the law of
\be\label{trotter}
\widetilde X_{t,x}=\upsilon_t(x)+\sigma(x)Z_t^{(\alpha)},
\ee
where $\upsilon_t(x)$ denotes \emph{some} solution to the Cauchy problem for the ODE which corresponds to the deterministic part of the initial equation.

The paper is organized as follows. In Section~\ref{s20} we  formulate the main results of the paper and discuss the related results available in the literature.  Section~\ref{s3} is a preliminarily one for the proofs, and contains an outline of the parametrix method and constructions and estimates for approximate solutions to ODEs with H\"older continuous coefficients.  In Section~\ref{s4} the parametrix construction is specified, and the weak uniqueness of solution to \eqref{SDE} and  estimates for the transition probability density for this  solution  are proved. In Section~\ref{s5} we evaluate the properties of the derivative $\prt_tp_t(x,y)$.

\section{Main results}\label{s20}

In this section we formulate the main results of the paper.  The proofs  are postponed to the rest of the paper.

\subsection{Notation and assumptions}\label{sA}

 Through the paper we use the following notation. By $C_\infty(\rd)$ we denote the class of continuous functions vanishing at infinity; clearly, $C_\infty(\rd)$ is a Banach space with respect to the  $\sup$-norm $\|\cdot\|_\infty$. By $C_\infty^k(\rd)$, $k\geq 1$, (respectively, $C_b^k(\rd)$) we denote  the  class of $k$-times continuously differentiable functions, vanishing  at infinity  (respectively, bounded) together with their derivatives. In the case this does not cause misunderstandings we omit $\rd$ in the above notation; e.g. we often write $C_\infty$ instead of
 $C_\infty(\rd).$ As usual,   $a\wedge b :=\min(a,b)$, $a \vee b:=\max(a,b)$. By $|\cdot|$ we denote both the modulus of a real number and the Euclidean norm of a vector.  By  $c$  and $C$ we denote positive constants, the value of which may vary from place to place.   Relation $f\asymp g$ means that
$$
cg\leq f\leq C g.
$$
By $\Gamma(\cdot)$ we denote the Euler Gamma-function. We write $L_x f(x,y)$ in the case we need to  emphasize that an operator $L$ acts on a function $f(x,y)$ with respect to variable $x$. We use the following notation for space and time-space convolutions of functions respectively:
$$
(f\ast g)_t(x,y):=\int_{\Re^d}f_{t}(x,z)g_{t}(z,y)\, dz,
$$
$$
(f\star g)_t(x,y):=\int_0^t\int_{\Re^d}f_{t-s}(x,z)g_{s}(z,y)\, dzds.
$$

In what follows we specify the conditions on the objects involved in \eqref{SDE}, which we assume to hold true  throughout the entire paper.
Since our aim is to explain the main results and the methodology of the proofs in a most transparent way, we do not strive for imposing most general conditions possible. A the end of this subsection, a list of possible extensions is briefly discussed.

Let $Z^{(\alpha)}$, $\alpha\in (0,2)$,  be a L\'evy process  in $\rd$ with
$$
\E e^{i(\xi, Z_t^{(\alpha)})}=e^{-t|\xi|^\alpha}, \quad \xi\in \Re^d;
$$
that is, a rotationally invariant \emph{$\alpha$-stable} process.
 It is well known that the generator of the $C_\infty$-semigroup associated with $Z^{(\alpha)}$  is an extension of the operator
\begin{equation}\label{pv}
L^{(\alpha)}f(x)= \hbox{P.V.} \int_{\Re^d}\Big(f(x+u)-f(x)\Big)\frac{c_{\alpha,d}}{|u|^{d+\alpha}}du, \quad f\in C_\infty^2,
\end{equation}
where $c_{\alpha,d}$ is a constant which we do not need to specify here.  The  operator $L^{(\alpha)}$ is also called  a  \emph{fractional Laplacian}, and is denoted by $-(-\Delta)^{\alpha/2}$.

 By   $g^{(\alpha)}(x)$  we denote  the distribution density of the  variable $Z_1^{(\alpha)}$.
 Note that $L^{(\alpha)}$ is a homogeneous operator of the order $\alpha$ and the process $Z^{(\alpha)}$ is self-similar: for any $c>0$, the process
 $$
 c^{-1/\alpha}Z_{ct}^{(\alpha)},\quad t\geq  0
 $$
 has the same law as $Z^{(\alpha)}$. Consequently, the transition probability density of $Z^{(\alpha)}$ equals $t^{-d/\alpha} g^{(\alpha)}(t^{-1/\alpha} (y-x))$.

The drift coefficient $b:\rd\to \rd$ is assumed to be bounded and  H\"older continuous with the index $\gamma\in (0, 1]$: \begin{equation}\label{b-bdd-Hol}
|b(x)|\leq C, \quad |b(x)-b(y)|\leq C |x-y|^\gamma, \quad x,y\in \Re^d.
\end{equation}
The coefficient $\sigma$ is assumed to be scalar-valued. We denote by $a(x)=|\sigma(x)|^\alpha$ the \emph{jump intensity} coefficient. We assume this coefficient to be bounded and separated from zero and {H\"older continuous} with some index $\eta\in (0,1]$, i.e.
\begin{equation}\label{a_bdd_Hol}
 c\leq a(x)\leq C, \quad |a(x)-a(y)|\leq C|x-y|^\eta, \quad x,y\in \Re^d.
\end{equation}
Finally, consider the Cauchy problem to the ODE
\begin{equation}\label{forwardODE}
d\upsilon_t=b(\upsilon_t)\, dt, \quad \upsilon_0=x.
\end{equation}
By the Peano theorem, this problem has a solution, but if $\gamma<1$ such a solution may fail to be unique. Denote by
$\Upsilon(x)$ the set of all such solutions.

As we have already mentioned, most of the above assumptions can be weakened. For the weak uniqueness result, respective conditions on coefficients can hold true only locally. The coefficient $\sigma$ can be taken matrix-valued, and instead of the rotationally invariant stable noise one can consider a symmetric stable noise such that its spectral measure has a density w.r.t. the surface measure on unit sphere in $\rd$, which is bounded and bounded away from 0. A thorough proof of such an extension should require an extended version of Proposition \ref{A1} below; we do not discuss these technical issues here, in a separate research we plan to treat these issues in a maximal  generality.

\subsection{The main results}\label{main}

In all the results stated below we assume coefficients $b, a=|\sigma|^\alpha$ to satisfy \eqref{b-bdd-Hol}, \eqref{a_bdd_Hol} and the balance condition \eqref{balance} to hold true. Our first main result concerns the weak uniqueness of the solution to \eqref{SDE} and the basic properties of the corresponding Markov process.

\begin{thm}\label{t1} Equation \eqref{SDE} possesses unique weak solution $X_t, t\geq 0$, and this solution is a Markov process. This process is a Feller one; that is, it generates a strongly continuous semigroup $P_t, t\geq 0$ in $C_\infty$:
$$
P_tf(x)=\E_xf(X_t), \quad t\geq 0, \quad f\in C_\infty.
$$
The generator $(A,D(A))$ of this semigroup is an extension of the operator $(L, C^2_\infty)$ defined by
\begin{equation}\label{symbol}
Lf(x)=\Big(b(x), \nabla f(x)\Big)+a(x) L^{(\alpha)}f(x), \quad f\in C_\infty^2.
\end{equation}
\end{thm}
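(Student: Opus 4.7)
The plan is to build a candidate transition density $p_t(x,y)$ via the parametrix method of Section~\ref{s3}, with zero-order approximation given by the density of $\widetilde X_{t,x}$ from \eqref{trotter}, and then to read off all three assertions from its analytic properties. Setting $\Phi_t(x,y) = (L_x - L_x^0) p_t^{(0)}(x,y)$, where $L_x^0$ freezes the coefficients along the approximate flow $\upsilon_t(x)$, I would define
\begin{equation*}
p_t(x,y) = p_t^{(0)}(x,y) + \sum_{n \geq 1} (p^{(0)} \star \Phi^{\star n})_t(x,y).
\end{equation*}
Under the balance condition $\alpha + \gamma > 1$, heat-kernel-type bounds on $p_t^{(0)}$ combined with a smoothing bound on $\Phi_t$ give the series an integrable time-singularity, so it converges to a nonnegative kernel satisfying $\int p_t(x,y)\,dy = 1$ and the Chapman--Kolmogorov identity.

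The family $P_t f(x) = \int p_t(x,y) f(y)\, dy$ is then a Markov semigroup on $C_\infty$; its Feller character follows from the joint continuity and decay of $p_t(x,y)$, while strong continuity at $0$ follows from the fact that $p^{(0)}_t(x,\cdot) \to \delta_x$ weakly. To identify the generator on $C_\infty^2$, I would split $t^{-1}(P_t f - f)$ into a contribution from $p_t^{(0)}$ and a remainder from $p_t - p_t^{(0)}$. Taylor-expanding $f$ in the first contribution and using $\widetilde X_{t,x} - x = (\upsilon_t(x) - x) + \sigma(x) Z_t^{(\alpha)}$, the limit $t \downarrow 0$ yields $(b(x), \nabla f(x))$ from $t^{-1}(\upsilon_t(x) - x) \to b(x)$ and $a(x) L^{(\alpha)}f(x)$ from the stable scaling of $Z^{(\alpha)}$, while the remainder vanishes by the explicit upper bound on $p_t - p_t^{(0)}$ built into the parametrix. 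Hence $C_\infty^2 \subset D(A)$ and $A|_{C_\infty^2} = L$.

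For weak uniqueness I would use the resolvent. For each $\lambda > 0$ and $f \in C_\infty$, the function $R_\lambda f = \int_0^\infty e^{-\lambda t} P_t f\, dt$ lies in $D(A)$ and satisfies $(\lambda - A) R_\lambda f = f$. If $Y$ is any weak solution of \eqref{SDE}, It\^o's formula for $\alpha$-stable semimartingales applied to $u \in C_\infty^2$ shows that $u(Y_t) - u(Y_0) - \int_0^t L u(Y_s)\, ds$ is a martingale, so $Y$ solves the martingale problem for $(L, C_\infty^2)$. Since $C_\infty^2$ is a core for $A$, this identity extends to $u = R_\lambda f$; taking expectations yields $\E_x \int_0^\infty e^{-\lambda t} f(Y_t)\, dt = R_\lambda f(x)$ for all $\lambda > 0$ and $f \in C_\infty$, which by Laplace inversion determines the one-dimensional distributions of $Y$ uniquely. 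The Markov property and uniqueness of the full law of $Y$ then follow by the standard Stroock--Varadhan conditioning argument.

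The main obstacle is the first step. When $\alpha < 1$ and $\gamma < 1$, the set $\Upsilon(x)$ is typically non-trivial and no measurable selection $x \mapsto \upsilon_t(x)$ need be smooth, so the exact deterministic flow cannot play the role it plays in the Lipschitz setting of \cite{KK15}. It must be replaced by the approximate flow constructed in Section~\ref{s3}, and verifying that the resulting $p_t^{(0)}$ yields a parametrix residual $\Phi$ whose time-singularity is still integrable --- i.e.\ that the balance condition $\alpha + \gamma > 1$ is precisely what makes the parametrix operational --- is the delicate analytic heart of the argument.
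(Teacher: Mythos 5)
Your overall architecture (parametrix to build a candidate density, then identify the generator and deduce uniqueness through the martingale problem) matches the spirit of the paper, but there are three genuine gaps, and the first one is structural.

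\textbf{The zero-order approximation.} You propose to take $p^{(0)}_t(x,y)$ to be the density of $\widetilde X_{t,x}=\upsilon_t(x)+\sigma(x)Z^{(\alpha)}_t$, i.e.\ $\widetilde p_t(x,y)$ from \eqref{ptilde}, which has $a(x)$ and $\upsilon_t(x)$ frozen at the \emph{initial} point $x$. This cannot serve as the parametrix seed: computing $\Phi_t(x,y)=-(\partial_t-L_x)p^{(0)}_t(x,y)$ requires applying $L_x=(b(x),\nabla_x)+a(x)L^{(\alpha)}_x$ to the $x$-dependence, and $\nabla_x$ hits both $a(x)$ (merely H\"older) and $\upsilon_t(x)$ (a solution of a H\"older-coefficient ODE, generally not differentiable in $x$). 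The paper avoids exactly this by freezing coefficients at the \emph{terminal} point $y$ via the \emph{smoothed backward flow} $\kappa_t(y)=\theta_t^0(y)$, cf.\ \eqref{p01}: then $L_x$ sees only translations in $x$, and the mismatch terms $a(x)-a(y)$ and $b(t,\kappa_t(y))-b(x)$ produce exactly the H\"older gains needed (Lemma \ref{lPhi1}). The density $\widetilde p_t$ enters only \emph{after} the construction, via the ``fine tuning'' of Section \ref{s42}, where \eqref{aflow1} and Lemma \ref{l43} show $p^0_t$ and $\widetilde p_t$ are comparable to within $O(t^\delta)$. You also need the mollified drift $b(t,\cdot)$ (not just some measurable selection from $\Upsilon$) because the estimates require $\nabla b(t,\cdot)$ with the precise blow-up $Ct^{\gamma/\alpha-1/\alpha}$; this is what makes the balance condition bite.

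\textbf{Chapman--Kolmogorov and total mass.} You claim the parametrix series ``converges to a nonnegative kernel satisfying $\int p_t(x,y)\,dy=1$ and the Chapman--Kolmogorov identity.'' None of these are automatic consequences of convergence of the series. The paper obtains positivity from the two-sided bound \eqref{two-sided} and derives Chapman--Kolmogorov only \emph{after} establishing well-posedness of the martingale problem, by invoking the general Ethier--Kurtz machinery (Theorems 4.2.4, 4.2.6) for the unique solution. Conservativeness comes from $\P_x(\DD(\Re^+,\rd))=1$, not from the series.

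\textbf{The core assertion.} Your uniqueness argument hinges on ``Since $C^2_\infty$ is a core for $A$, this identity extends to $u=R_\lambda f$.'' But establishing that $C^2_\infty$ is a core is precisely the regularity obstacle that the paper's \emph{approximate fundamental solution} construction (Section \ref{s43}, the $\varepsilon$-shifted family $p_{t,\varepsilon}$ and Lemma \ref{l5}) is designed to circumvent. The functions $g^T(t,x)=P_{T-t}f(x)$ are not shown to lie in $C^1_t\times C^2_\infty$; instead they are approximated uniformly by $g^T_\varepsilon$ for which $(\partial_t+L_x)g^T_\varepsilon\to 0$, and the martingale identity is passed to the limit. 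Your resolvent route would work if you had the core property, but you have not supplied it, and the paper deliberately avoids needing it.

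A minor point: ``Taylor-expanding $f$'' does not by itself produce the non-local term $a(x)L^{(\alpha)}f(x)$; identification of the generator on $C^2_\infty$ in the paper follows from the approximate fundamental solution estimates rather than a pointwise expansion.
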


Our second main result concerns the properties of the transition probability density of the process $X$.

\begin{thm}\label{t2}
\begin{itemize}
  \item[I.] The Markov process $X$ has a transition probability density $p_t(x,y)$, i.e.
\be\label{P_int}
P_tf(x)=\int_{\rd}p_t(x,y)f(y)\, dy, \quad f\in  C_\infty, \quad t>0.
\ee
This density is a  continuous function  of $(t,x,y)\in (0, \infty)\times\rd\times\rd$.
  \item[II.] Fix a solution $\upsilon_\cdot(x)\in \Upsilon(x)$ of \eqref{forwardODE} and denote
 \begin{equation}\label{ptilde}
\widetilde p_t(x,y)= \frac{1}{t^{d/\alpha}a^{d/\alpha}(x)}g^{(\alpha)}\left({y-\upsilon_t(x)\over t^{1/\alpha}a^{1/\alpha}(x)}\right).
\end{equation}
 Denote by $\widetilde r_t(x,y)$ the residue term in the decomposition
\be\label{decomp}
 p_t(x,y)=\widetilde p_t(x,y)+\widetilde r_t(x,y),
\ee
 and put $\delta=1-1/\alpha+{\gamma/\alpha}$  (which is positive by the balance condition). Then
 for any  $\chi\in (0,\alpha\wedge \eta)$ and $T>0$, the following estimate for   the  residue term  holds true:
\begin{equation}\label{r_bound}
|\widetilde r_t(x,y)|\leq C\Big(t^\zeta+|y-\upsilon_t(x)|^\chi\wedge 1\Big)\widetilde p_t(x,y),\quad t\in (0,T], \quad x,y\in \rd,
\end{equation}
where
$$
\zeta=\min\left\{\delta, \chi, {\chi\over \alpha}\right\}.
$$
The transition probability density $p_t(x,y)$ itself possesses the following two-sided estimate:
 for any $T>0$, there exist positive $c,C$ such that
\be\label{two-sided}
c\widetilde p_t(x,y)\leq  p_t(x,y)\leq C\widetilde p_t(x,y), \quad t\in (0, T], \quad x,y\in \rd.
\ee
The constants  $c,C$ above does not depend on the choice of $\upsilon_\cdot(x)$; that is, the estimates \eqref{r_bound} and \eqref{two-sided} are  uniform over the class $\Upsilon(x)$.
\end{itemize}

\end{thm}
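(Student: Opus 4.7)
The plan is to construct $p_t(x,y)$ explicitly via a parametrix scheme tailored to the H\"older-drift, super-critical setting, extending the framework of \cite{KK15}. Fix an arbitrary $\upsilon_\cdot(x)\in\Upsilon(x)$ and take as zero-order approximation the kernel $\widetilde p_t(x,y)$ of \eqref{ptilde}, i.e.\ the density of $\widetilde X_{t,x}=\upsilon_t(x)+\sigma(x)Z^{(\alpha)}_t$. I would introduce the error kernel $\Psi_t(x,y)=(\partial_t-L_x)\widetilde p_t(x,y)$ and seek the true density in the form
$$
p_t(x,y)=\widetilde p_t(x,y)+(\widetilde p\star\Phi)_t(x,y),\qquad \Phi=\sum_{k\geq 1}\Psi^{\star k}.
$$
The critical analytical input is a pointwise estimate $|\Psi_t(x,y)|\leq C t^{\zeta-1}\widetilde q_t(x,y)$, where $\widetilde q_t$ is a companion stable-type kernel closed under the time-space convolution $\star$ and $\zeta=\min\{\delta,\chi,\chi/\alpha\}$ is strictly positive by the balance condition. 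This gain must absorb the three sources of defect feeding into $\Psi_t$: the H\"older mismatch $b(\cdot)-b(\upsilon_\cdot(x))$ of order $\gamma$; the H\"older mismatch $a(\cdot)-a(x)$ of order $\eta$; and the ODE-flow defect $\upsilon_t(x)-\hat\upsilon_t(x)=O(t^{1/(1-\gamma)})=o(t^{1/\alpha})$ between any two elements of $\Upsilon(x)$, controlled by the approximate-flow estimates from Section~\ref{s3}. A Beta-function-type iteration then yields absolute convergence of $\sum_k\Psi^{\star k}$ on any interval $(0,T]$.

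To finish part I, I would identify the constructed $p_t$ with the transition density of $X$. Theorem~\ref{t1} supplies a unique Feller semigroup $P_t$ via weak uniqueness, and a direct check that $\int p_t(x,y)f(y)\,dy$ solves the integral form of the Kolmogorov equation for $L$ — coupled with conservation of mass, obtainable by running the same parametrix representation with $f\equiv 1$ — forces the two semigroups to coincide. Joint continuity of $p_t(x,y)$ on $(0,\infty)\times\rd\times\rd$ then follows because each iterate $\Psi^{\star k}$ is continuous and the series converges uniformly on compacts away from $t=0$.

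For part II, the decomposition \eqref{decomp} is tautological with $\widetilde r_t=(\widetilde p\star\Phi)_t$, and the bound \eqref{r_bound} should be read off from the $\Psi$-estimate after a dichotomy on $(x,y)$: in the diagonal regime $|y-\upsilon_t(x)|\lesssim t^{1/\alpha}$ the $t^\zeta$-factor dominates, while in the off-diagonal regime a H\"older Taylor expansion of $g^{(\alpha)}$ around $t^{-1/\alpha}(y-\upsilon_t(x))$ yields the sharper factor $|y-\upsilon_t(x)|^\chi\wedge 1$. Uniformity over $\Upsilon(x)$ comes for free: changing the representative of the flow translates $\widetilde p_t$ by $o(t^{1/\alpha})$ in $y$, which is absorbed into $\widetilde r_t$ at the same rate. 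The two-sided estimate \eqref{two-sided} follows from \eqref{r_bound} on a small interval $(0,t_0]$, where $|\widetilde r_t|\ll\widetilde p_t$, and propagates to $[t_0,T]$ by a standard Chapman--Kolmogorov chaining using the global lower bound for $\widetilde p_t$.

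The chief obstacle is that $\widetilde p_t$ is \emph{not} a genuine fundamental solution of any differential operator — the drift along $\upsilon_t(x)$ is neither Lipschitz nor unique — so the usual parametrix identity $(\partial_t-\widetilde L_x)\widetilde p_t=0$ is replaced by an approximate identity, and the discrepancy must be carried explicitly through every iterate. Showing that this discrepancy is dominated by an integrable stable-type kernel uniformly in $\upsilon_\cdot(x)\in\Upsilon(x)$ is the point where the quantitative strength of the balance condition enters: the positivity of $\delta=1-1/\alpha+\gamma/\alpha$ is exactly what makes the time-space convolution series summable with a Beta-type gain. Once this dominating estimate is in hand, the rest of the argument is careful but essentially standard parametrix bookkeeping.
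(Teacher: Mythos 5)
Your plan starts from the wrong parametrix, and the defect is structural rather than cosmetic. You propose to take the zero--order kernel to be $\widetilde p_t(x,y)$ of \eqref{ptilde} and then form $\Psi_t(x,y)=(\partial_t-L_x)\widetilde p_t(x,y)$. But $L_x$ contains $\big(b(x),\nabla_x\big)$ and $a(x)L^{(\alpha)}_x$, and $\widetilde p_t(x,y)$ depends on $x$ through two objects which are only H\"older (and not even single-valued) in $x$: the flow $\upsilon_t(x)$ and the coefficient $a(x)$. Consequently $\nabla_x\widetilde p_t(x,y)$ does not exist, and $(\partial_t-L_x)\widetilde p_t$ is not a well-defined object. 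You flag at the end that ``$\widetilde p_t$ is not a genuine fundamental solution,'' but the obstruction is worse: it is not even in the domain of $L_x$, so there is no approximate parametrix identity to be had in the form you write. This is precisely why the paper does not use $\widetilde p_t$ as the parametrix.

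The paper avoids this by freezing at the \emph{target} point and using a \emph{backward} flow with a mollified drift: $p^0_t(x,y)=t^{-d/\alpha}a^{-d/\alpha}(y)\,g^{(\alpha)}\!\big((\kappa_t(y)-x)/(t^{1/\alpha}a^{1/\alpha}(y))\big)$ with $\kappa_t(y)=\theta^0_t(y)$ (Section~\ref{s41}). All $x$-dependence is then through $\kappa_t(y)-x$, so $L_x p^0_t$ is classical; the $y$-dependence is irregular but $L_x$ never touches it. Mollifying $b$ in time is essential for a second reason visible in \eqref{Phi_c}: $\partial_t\kappa_t(y)=-b(t,\kappa_t(y))$, and $b(t,\cdot)$ is Lipschitz with an integrable Lipschitz constant $Ct^{\delta-1}$ by \eqref{appr_der} and \eqref{Lip_int}, which is what makes the bound \eqref{appr_b} on $b(t,\kappa_t(y))-b(x)$, and hence \eqref{Phi1}, come out with the gain $t^{-1+\delta}$. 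This gain, plus the sub-convolution property of the kernels $H^{(\lambda)}_t$ (which itself rests on the quantitative flow comparison \eqref{aflow}), is what the paper actually sums via Lemma~\ref{lH10}. The passage from $p^0_t$ to $\widetilde p_t$ is then a separate ``fine tuning'' step: Lemma~\ref{l43} and \eqref{422} show $p^0_t(x,y)\asymp\widetilde p_t(x,y)$ with a multiplicative error $e^{\pm Ct^\delta}$, from which \eqref{r_bound} and the uniformity over $\Upsilon(x)$ follow. Your dichotomy argument for \eqref{r_bound} and your Chapman--Kolmogorov chaining for \eqref{two-sided} are reasonable ideas in spirit, but they cannot be launched until the series has been constructed around a kernel that $L_x$ can actually act on; as written, your construction does not get off the ground.
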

\begin{rem} It follows directly from \eqref{r_bound} and the formula for $\widetilde p_t(x,y)$ that
$$
\int_{\rd}|\widetilde r_t(x,y)|\, dy\leq Ct^\zeta.
$$
Hence the residue term in \eqref{decomp} is indeed negligible,  and $\widetilde p_t(x,y)$ represents the principal part of $p_t(x,y)$ as $t\to 0+$.
Note that the residue term is negligible in the integral sense but not uniformly, since the right hand side term in \eqref{r_bound}  is comparable to $\widetilde p_t(x,y)$ when $|y-\upsilon_t(x)|\geq 1$.
\end{rem}

\begin{rem}\label{r21} The decomposition \eqref{ptilde} will be obtained in two steps: first, by means of the parametrix method we will construct similar representation  with another principal part $p_t^0(x,y)$ (see \eqref{p01} below); second, we will make ``fine tuning'' of this representation in order to re-arrange the principal part.  The choice of the principal part $\widetilde p_t(x,\cdot)$ in \eqref{decomp} has a clearly seen advantage that it has a good  stochastic interpretation as  the distribution density of $\widetilde X_{t,x}$ defined by \eqref{trotter}.

On the other hand, the set of solutions $\Upsilon(x)$ is implicit and may have a complicated structure. Hence it might be useful for further applications  to have a representation of $p_t(x,y)$ in a form similar to \eqref{decomp}, but with $\upsilon_t(x)$ changed to a more explicit term. Here we briefly outline two possibilities to arrange such a representation. First, consider a sequence of Picard-type approximations
\be\label{Picard}
\upsilon_{0, t}(x)\equiv x, \quad \upsilon_{k, t}(x)=x+\int_0^tb(\upsilon_{k-1, s}(x))\, ds, \quad k\geq 1, \quad t\geq 0.
\ee
Though such a procedure now typically fails to give a successful approximation for a solution, it still can be used in a comprehensive representation of the transition probability density $p_t(x,y)$. Namely, denote
$$
\rho_k=1+\dots+\gamma^{k+1}-{1\over \alpha}, \quad k\geq 0,
$$
and assume that  for a given $k$
\be\label{211}
\rho_k>0.
\ee
We will show in Section \ref{s42} below that $p_t(x,y)$ have  a representation similar to \eqref{decomp} with $\widetilde p_t(x,y)$ replaced by
$$
\widehat p_t(x,y)= \frac{1}{t^{d/\alpha}a^{d/\alpha}(x)}g^{(\alpha)}\left({y-\upsilon_{k,t}(x)\over t^{1/\alpha}a^{1/\alpha}(x)}\right)
$$
and corresponding residue term $\widehat r_t(x,y)$ satisfying an analogue of \eqref{r_bound} with $\widetilde p_t(x,y)$ changed to $\widehat  p_t(x,y)$ and $\zeta$ changed to $\min\{\zeta, \rho_k/(1-\gamma)\}$.

Note that for any $k\geq 0$ condition \eqref{211} is strictly stronger than the balance condition \eqref{balance}, and under \eqref{balance} there exists $k$ such that \eqref{211} holds. We note that the heuristic explanation of the balance condition given in the Introduction well corresponds to condition \eqref{211}: if \eqref{211} holds, then the ``approximation error'' for $\upsilon_{k,t}(x)$ is $Ct^{1/\alpha+\rho_k}\ll t^{1/\alpha}, t\to 0+$ (see Section \ref{flows} below), hence the noise is ``intensive enough'' to negate this error. We mention that conditions \eqref{211} with  $k=0, k=1$ are exactly the assumptions imposed in \cite{KK15}, Case \textbf{A} and Case \textbf{B}, respectively.

Another possible option is to take an approximate solution $\upsilon_t^0(x)$ instead of $\upsilon_t(x)$, see Section \ref{flows} below; in this case $\zeta$ in the analogue of \eqref{r_bound} remains unchanged.
\end{rem}

In the last main result of this paper, we establish the properties of the derivative of the transition probability density w.r.t. the time variable. The aim of the latter theorem is two-fold. On one hand, the estimates on the derivative $\prt_tp_t(x,y)$ have natural applications in stochastic approximation problems, e.g. \cite{GL08}, \cite{KMN14}, \cite{GK14}. On the other hand, the proof of Theorem \ref{t3} below clarifies the methodology:  we will see below that the particular choice of the ``zero approximation term'' to $p_t(x,y)$ in the parametrix method is a subtle question, which can be solved in various ways. Several such ways may be successful if we are only interested in assertions of Theorem \ref{t1} and Theorem \ref{t2}; see Section \ref{choice}  below. Considering in addition the derivative $\prt_t$ actually shows that most of these choices are not suitable for the purposes of studying \emph{sensitivities} of this density. Here we consider the sensitivity w.r.t. $t$, only, but we expect that similar methods can be applied to other types of sensitivities  (e.g. w.r.t. $x,y$ or w.r.t. additional parameters involved into the coefficients). This is the subject of our forthcoming research.

\begin{thm}\label{t3}  The function $p_t(x,y)$ possesses a continuous derivative $\prt_tp_t(x,y)$ on the set $(0, \infty\times \rd\times \rd)$. Denote by $\widetilde R_t(x,y)$ the residue term in the representation
\be\label{decomp_dt}
 \prt_t p_t(x,y)=\prt_t\widetilde p_t(x,y)+ \widetilde R_t(x,y),
\ee
where $\widetilde p_t(x,y)$ is given by \eqref{ptilde}, and denote
$$\alpha'=\min\{1, \alpha\}.
$$
Then for any $T>0$ and $\chi\in (0,\alpha\wedge \eta)$ the following estimates for the principal  and the residue terms in \eqref{decomp_dt} hold true:
$$
|\prt_t\widetilde p_t(x,y)|\leq Ct^{-1/\alpha'} \widetilde p_t(x,y),\quad t\in (0, T], \quad x,y\in \rd,
$$
$$
 |\widetilde R_t(x,y)|\leq Ct^{-1/\alpha'}\Big(t^\zeta+|y-\upsilon_t(x)|^\chi\wedge 1\Big)\widetilde p_t(x,y),\quad t\in (0, T], \quad x,y\in \rd.
$$

As a corollary, the derivative  $\prt_tp_t(x,y)$ itself possesses the estimate
\be\label{der_bound}
|\prt_t p_t(x,y)|\leq Ct^{-1/\alpha'} \widetilde p_t(x,y),\quad t\in (0, T], \quad x,y\in \rd,
\ee

\end{thm}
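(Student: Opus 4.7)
The plan is to differentiate the parametrix representation $p_t(x,y)=\widetilde p_t(x,y)+(\widetilde p\star\Psi)_t(x,y)$ built in Section~\ref{s4} (after the fine tuning of Remark~\ref{r21} identifying the principal part with $\widetilde p_t$) and to split the resulting derivative according to~\eqref{decomp_dt}. The principal contribution $\partial_t\widetilde p_t$ comes immediately from the forward equation: since $\widetilde p_t(x,\cdot)$ is by construction the density of $\upsilon_t(x)+\sigma(x)Z^{(\alpha)}_t$, one has
\[
\partial_t\widetilde p_t(x,y)=-\bigl(b(\upsilon_t(x)),\nabla_y\widetilde p_t(x,y)\bigr)+a(x)L^{(\alpha)}_y\widetilde p_t(x,y).
\]
Self-similar scaling of $g^{(\alpha)}$ provides the pointwise bounds $|\nabla_y\widetilde p_t(x,y)|\leq Ct^{-1/\alpha}\widetilde p_t(x,y)$ and $|L^{(\alpha)}_y\widetilde p_t(x,y)|\leq Ct^{-1}\widetilde p_t(x,y)$ (the second via $L^{(\alpha)}g^{(\alpha)}=\partial_t[t^{-d/\alpha}g^{(\alpha)}(\cdot/t^{1/\alpha})]|_{t=1}$ together with the $\alpha$-homogeneity of $L^{(\alpha)}$); combined with $\|b\|_\infty,\|a\|_\infty<\infty$ these give $|\partial_t\widetilde p_t(x,y)|\leq Ct^{-1/\alpha'}\widetilde p_t(x,y)$, because for $\alpha\geq 1$ the $t^{-1}$ term dominates while for $\alpha<1$ it is the $t^{-1/\alpha}$ term that does.

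The technical heart of the proof is differentiating the convolution $(\widetilde p\star\Psi)_t$ in $t$. A naive differentiation under the integral produces a singularity $(t-s)^{-1/\alpha'}$ near $s=t$ that is borderline non-integrable for $\alpha\geq 1$ and genuinely worse than $(t-s)^{-1}$ for $\alpha<1$. To handle this I would split the convolution at $s=t/2$: on $[0,t/2]$ the factor $t-s$ is bounded below by $t/2$, so $\partial_t$ can be taken under the integral sign directly; on $[t/2,t]$ the change of variable $u=t-s$ rewrites the piece as $\int_0^{t/2}\int_{\rd}\widetilde p_u(x,z)\Psi_{t-u}(z,y)\,dz\,du$, so that $\partial_t$ now acts on $\Psi_{t-u}$ and additionally produces the boundary term $\Psi_t(x,y)$ via the delta-function asymptotics of $\widetilde p_u(x,\cdot)$ as $u\to 0+$. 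The time regularity of $\Psi$ needed for the second piece is bootstrapped from the integral equation $\Psi=\Phi+\Phi\star\Psi$ once a H\"older-in-time estimate on the parametrix discrepancy $\Phi_t$ is in hand; this estimate is in turn read off from the explicit forward equation for $\widetilde p_t$ above, using the H\"older regularity of $a$, $b$, and $\upsilon_t$ in the time variable.

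With the representation and its integrability secured, the residue estimate follows by multiplying the kernel-convolution bounds from Section~\ref{s4} (the same bounds that produced~\eqref{r_bound}) by the extra factor $t^{-1/\alpha'}$ brought in by time differentiation; this factor separates cleanly from the space-concentration factor $(t^\zeta+|y-\upsilon_t(x)|^\chi\wedge 1)$ and gives the stated bound on $\widetilde R_t$. Adding the principal and residue bounds, and absorbing $t^\zeta$ into the constant on $(0,T]$, yields~\eqref{der_bound}; continuity of $\partial_t p_t$ then follows by dominated convergence applied to the derived representation. The main obstacle — exactly the difficulty that Remark~\ref{r21} anticipates — is that differentiating the convolution without losing the sharp $t^{-1/\alpha'}$ scaling requires the zero-order approximation to supply enough H\"older-in-time regularity of $\Phi$ (and hence of $\Psi$); this constraint singles out the specific ``fine tuning'' adopted in the paper and rules out various spatially-equivalent alternative choices for $p^0$.
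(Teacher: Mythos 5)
Your general architecture — split the time‑convolution at $s=t/2$, establish time regularity of $\Phi$ and bootstrap it to $\Psi$ through the integral equation, then add the contribution of $\partial_t$ of the zero‑order term — is the same as the paper's. The bound $|\partial_t\widetilde p_t|\leq Ct^{-1/\alpha'}\widetilde p_t$ via the forward equation for $\widetilde p_t$ is correct and is essentially what the paper computes for $p^0_t$. But the execution has two genuine gaps.

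First, the starting representation is wrong. You propose to differentiate $p_t=\widetilde p_t+(\widetilde p\star\Psi)_t$, but the paper never establishes such a representation: the parametrix series is built with $p_t^0(x,y)$ from \eqref{p01}, which contains the \emph{backward} flow term $\kappa_t(y)=\theta_t^0(y)$, and the swap to $\widetilde p_t(x,y)$ is only a pointwise comparison applied after the fact (Section~\ref{s42}). This is not an aesthetic distinction. The kernel $\Phi_t=-(\partial_t-L_x)p^0_t$ requires applying $L_x$ (gradients and $L^{(\alpha)}$ in $x$) to the zero‑order term, and $p^0_t$ depends on $x$ only through $\kappa_t(y)-x$, so these derivatives are harmless. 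The analogue built on $\widetilde p_t$, which depends on $x$ through $y-\upsilon_t(x)$ (and through $a(x)$), would produce the flow‑derivative $\nabla_x\upsilon_t(x)$, which need not exist for a merely H\"older drift, so $\widetilde\Phi=-(\partial_t-L_x)\widetilde p$ is not even well defined. You must do the parametrix work with $p^0_t$, and only at the end trade $p^0_t$ for $\widetilde p_t$ and $\partial_tp^0_t$ for $\partial_t\widetilde p_t$ via the ``fine tuning'' inequality \eqref{422} and its derivative analogue.

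Second, and this is the crux, H\"older-in-time regularity of $\Phi$ ``read off from the forward equation for $\widetilde p_t$ using the H\"older regularity of $a$, $b$, and $\upsilon_t$'' is not the input the method needs: to differentiate $\Psi_{t-u}$ under the integral in the $s\in[t/2,t]$ piece one needs actual continuous differentiability of $\Phi$ (hence of each $\Phi^{\star k}$, hence of $\Psi$) in $t$, with the quantitative bound of Lemma~\ref{deriv_Phi}. The only reason $\partial_t\Phi_t$ exists is the mollification: $\Phi_t$ contains the drift increment $b(t,\kappa_t(y))-b(x)$ with the \emph{mollified} $b(t,\cdot)$, so $\partial_t\big(b(t,\kappa_t(y))\big)=(\partial_t b)(t,\kappa_t(y))+(\nabla b(t,\kappa_t(y)),\partial_t\kappa_t(y))$ makes sense and is estimated via \eqref{appr_der} and its $\partial_t$-counterpart. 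As Remark~\ref{r51} stresses, with an exact $\theta_t(y)$ or a Picard iterate the analogous term involves $\nabla b$ of the raw H\"older $b$, which is undefined. So the resolving device is the mollified backward flow inside $p^0_t$ — not the ``fine tuning'' of Remark~\ref{r21}, which is what you point to — and the relevant regularity is differentiability, not H\"older continuity, of $\Phi$ in $t$. (Minor: with the $t/2$ split the boundary term from the Leibniz rule is $(p^0_{t/2}\ast\Psi_{t/2})(x,y)$, not $\Psi_t(x,y)$; the $\Psi_t$-type term only appears in the naive full‑interval heuristic.)
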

We mention that if the original equation does not contain the drift ($b(x)\equiv 0$) then in the above estimates $\alpha'$ can be changed to 1.
We also remark that the statement of Theorem \ref{t3} is even stronger than the one formulated in Theorem 2.6 \cite{KK15}, Case \textbf{C} under the stronger assumption that $b$ is Lipscghitz continuous. Such an improvement is a result of a well chosen ``zero approximation term''   in the parametrix construction.

\subsection{Related results: an overview}\label{over}

   The weak uniqueness problem for L\'evy driven SDEs is closely related to the well-posedness of the martingale problem for  integro-differential operators of certain type. A large  group of papers in that direction is  available, e.g.  \cite{Ba88},\cite{Ko84a}, \cite{Ko84b},  \cite{MP92a}--\cite{MP12}, this list is far form being complete. The weak uniqueness results available to the author, in various forms, rely on a typical assumption which vaguely means that the ``jump part'' of the operator dominates the entire operator (which, from the analytical point of view, is a kind of the sectorial condition).   For the formal generator \eqref{symbol} with $\alpha<1$ and non-zero drift, this condition fails and the SDE \eqref{SDE} in that case is far away from the domain where the available weak uniqueness results are applicable. Of course, if $\alpha\in (1,2)$ the weak uniqueness follows easily from the available results, e.g.  \cite{Ba88}.

An interesting counterpart to our weak uniqueness result is contained in the recent preprint \cite{CSZ15}, where the  \emph{strong} uniqueness for an SDE of the form \eqref{SDE} with H\"older continuous drift coefficient is proved under the following assumption which looks similar to the balance condition \eqref{balance}: $\alpha+\gamma/2>1$. Though, the results of \cite{CSZ15} are not fully comparable with ours because therein  $\sigma(x)\equiv 1$.

Our proof of the weak uniqueness is based on the parametrix construction of an (approximate) fundamental solution to a Cauchy problem for $\prt_t-L$. The argument  is insensitive w.r.t. the structure of the model, and is actually  based on the fact that, as soon as the parametrix construction is completed, one can construct a large family of \emph{approximate harmonic functions} for the operators $\prt_t-L,\prt_t+L$; see Section \ref{s43}.   We feel that the idea behind this argument is close to the one developed in the diffusion setting in \cite{BP09} and extended in \cite{M11}, \cite{HM14}, though the particular form of the argument is different. We mention also that using properly chosen approximate harmonic functions one can extend the classical argument based on the Positive Maximum Principle for $L$ in order to get the positivity and other properties of the semigroup $P_t, t\geq 0$; see Section 4 in \cite{KK15}. In the current setting this ``analytical'' argument applies as well, but in order to explain all available possibilities we give another proof. This ``probabilistic''  proof is shorter but less explicit and  requires more preliminaries about  Markov processes being  solutions to a martingale problem.

For the background of the parametrix construction in the classical diffusive setting, we refer to the monograph by Friedman \cite{Fr64}; see also the original paper by E.Levi \cite{Le07} and the paper by W. Feller \cite{Fe36}. This construction was  extended to equations with pseudo-differential operators in   \cite{ED81}, \cite{Ko89} and \cite{Ko00}, see also the reference list and an  extensive overview in the monograph \cite{EIK04}.  The list of subsequent and related  publications is large, and we cannot discuss it here  in details. Let us only mention three recent papers: \cite{KM11}, where the discrete-time analogue of the parametrix construction for the Eurer scheme for stable-driven SDEs was developed,  \cite{CZ13},  where two-sided estimates, more precise than those in \cite{Ko89}, were obtained,   and \cite{BK14},  where the  probabilistic interpretation of the parametrix construction and its application to the Monte-Carlo  simulation was developed.

    In all the references listed above it is required that either the stability index $\alpha$ is $>1$, or the gradient term is not  involved in the equation: this is the same ``sectorial type'' assumption which was mentioned above. If $\alpha<1$, because of the lack of domination of the ``jump part'' of the generator, the proper construction of the ``zero approximation term'' should involve an additional correction which corresponds to the drift, two versions of such a construction were proposed in \cite{KK15}, Case \textbf{B} and Case \textbf{C}. This effect has a similar nature with the one revealed in \cite{M11}, where a chain of equations is considered, where only the last equation contains the diffusive term and which consequently corresponds to  a degenerate diffusion. Because of the degeneracy, the diffusive term therein also lacks the domination property, and this motivates extra correction terms  in the parametrix construction. We mention also the recent preprints \cite{HM14}, where a chain of equations driven by a stable process is studied in a similar manner, and \cite{H15} where an SDE driven by tempered $\alpha$-stable process with $\alpha<1$ and possibly singular spectral measure is considered. In all these references the drift coefficient is assumed to be Lipschitz continuous, which makes corresponding ODEs for correcting terms to be well solvable. The only exception is the Case B in \cite{KK15}, however  therein a condition stronger  than the balance condition  \eqref{balance} is imposed on the H\"older index for $b$.

For brevity, we omit a discussion here and refer to \cite{KK15} for other related topics: the heuristics for the choice of the zero-order term in the parametrix expansion, the related papers  which concern SDEs with singular drift terms (\cite{Po94}, \cite{PP95}, \cite{BJ07}, \cite{KS14}), the related results of \cite{KS14} and \cite{CW13} on weak uniqueness for gradient perturbations of stable generators, and the large group of  results, focused on the construction of a \emph{semigroup} for a Markov process with a given symbol rather than of the transition probability density $p_t(x,y)$, which relies on the symbolic calculus approach for the  parametrix  construction  (\cite{Ja94}, \cite{Ja96},  \cite{Ho98a}, \cite{Ho98b},  \cite{Bo05}, \cite{Bo08},   \cite{Ku81},  \cite{Ts74}, \cite{Iw77}).

\section{Preliminaries to the proofs}\label{s3}

In this section we outline the parametrix method, which is our key tool for proving the main statements given in Section \ref{main}. We also develop an auxiliary construction and  evaluate some results about approximate solutions to ODEs with H\"older continuous coefficients, which will be used in the subsequent proofs.

\subsection{The parametrix construction: an outline}

If $X$ is a Markov process solution to \eqref{SDE}, by the It\^o formula one can naturally expect that its generator $(A, D(A))$ is an extension of $(L, C^2_\infty)$; the operator $L$ is defined in \eqref{symbol}. Then one can try to seek for the unknown transition probability density of $X$ assuming it is a  \emph{fundamental solution} to the parabolic Cauchy problem associated with $L$. Recall that a function $p_t(x,y)$ is said to be a {fundamental solution} to the Cauchy problem for an operator
\be\label{L_ful}
\prt_t-L,
\ee
if for  $t>0$ it is differentiable in $t$, belongs to the domain of $L$ as a function of $x$, and satisfies
\be\label{L_fund}
\Big(\prt_t-L_x\Big)p_t(x,y)=0, \quad t>0, \quad x,y\in \Re^d,
\ee
\be\label{L_delta}p_t(x, \cdot)\to \delta_x, \quad t\to 0+, \quad x\in \Re^d;
\ee
see  \cite[Def.~2.7.12]{Ja02} in the case of a general pseudo-differential operator, which is the generalization of the corresponding definition (cf. \cite{Fr64}, for example) in the parabolic/elliptic setting.

A classical tool for constructing fundamental solutions is the  \emph{parametrix method}, below we explain the version of this method which is used in the sequel. Fix some function $p_t^0(x,y)$, which will be considered as a ``zero order approximation'' to the unknown fundamental solution $p_t(x,y)$. Denote by $r_t(x,y)$ the residue term  with respect to  this approximation:
\be\label{sol}
p_t(x,y)=p_t^0(x,y)+r_t(x,y).
\ee
If $p_t^0(x,y)$ belongs to $C^1$ and $C^2_\infty$ in the variables $t$ and $x$, respectively, we can put
\be\label{Phi}
\Phi_t(x,y):=-\Big(\prt_t-L_x\Big)p_t^0(x,y),\quad t>0, \quad x,y\in\Re^d.
\ee
Because $p_t(x,y)$ is supposed  to be the fundamental solution for the operator  (\ref{L_ful}) and $A$ extends $L$, we have
$$
\Big(\prt_t-L_x\Big)r_t(x,y)=\Phi_t(x,y).
$$
If $p_t^0(x,y)$ satisfies an analogue of \eqref{L_delta}, then a formal solution to this equation can be given in  terms of  the unknown fundamental solution $p_t(x,y)$,  and then using (\ref{sol}) we get the following equation for $r_t(x,y)$:
$$
r_t(x,y)=(p\star \Phi)_t(x,y)=(p^0\star \Phi)_t(x,y)+(r\star \Phi)_t(x,y).
$$
The formal solution to this equation is given by the convolution
\be\label{r}
r_t(x,y)=(p^0\star \Psi)_t(x,y),
\ee
where $\Psi$ is the sum of $\star$-convolution powers  of $\Phi$:
\be\label{Psi}
\Psi_t(x,y)=\sum_{k\geq 1}\Phi^{\star k}_t(x,y).
\ee
If  the series (\ref{Psi})  converges and the convolution  (\ref{r}) is well defined, we obtain   the required function $p_t(x,y)$ in the form
\be\label{sol_1}
p_t(x,y)=p_t^0(x,y)+\sum_{k\geq 1}(p^0\star\Phi^{\star k})_t(x,y).
\ee

Clearly, the above argument is yet purely formal; to make it rigorous, we need to prove that  the parametrix construction is feasible, i.e. that the sum in the r.h.s. of (\ref{sol_1}) is well defined, and then to associate  $p_t(x,y)$  with the initial operator $L$. This last step is far from being trivial, hence we postpone its discussion to Section \ref{s43}. Here we just mention that  $p_t(x,y)$ which we actually obtain is not a fundamental solution in the classical sense exposed above. However, it still can be interpreted as an (approximate) fundamental solution in a sense, which is completely sufficient for all our  purposes.
The first step is more direct, and we give here a generic calculation which we use to analyse the convolution powers involved in \eqref{Psi} in a unified  way.

We say that a non-negative kernel $\{H_{t}(x,y), t>0, x,y\in \Re^d\}$ has a \emph{sub-convolution property}, if for every $T>0$ there exists a constant $C_{H,T}>0$ such that
\begin{equation}\label{H0}
(H_{t-s}* H_s)(x,y)\leq C_{H,T} H_{t}(x,y), \quad t\in (0, T], \quad s\in (0, t), \quad x,y\in \Re^d.
\end{equation}

For the following general estimate we refer to \cite{KK15}, Lemma 3.2 (the estimate \eqref{F25} below slightly differs from (3.29) in \cite{KK15}, but its proof is completely the same).
\begin{lem}\label{lH10}
Suppose that  function $\Phi_t(x,y)$  satisfies
\begin{equation}\label{F10}
\big|\Phi_t(x,y)\big| \leq C_{\Phi,T} \Big( t^{-1+\delta_1} H_t^1(x,y)+ t^{-1+\delta_2} H_t^2(x,y)\Big), \quad t\in (0,T], \, x,y\in \rd,
\end{equation}
with some $\delta_1, \,\delta_2\in (0,1)$ and some non-negative kernels $H_t^1(x,y), H_t^2(x,y).$ Assume also that
the kernels $H_t^i(x,y)$, $i=1,2$, have  the sub-convolution property with constant $C_{H,T}$,  and
\begin{equation}\label{H12}
H_t^1(x,y)\geq H_t^2(x,y).
\end{equation}

Then  for any $t\in (0,T]$, $x,y\in \rd$, we have

\begin{itemize}
\item[a)]
\begin{equation}\label{Fk}
\Big| \Phi^{\star k}_t(x,y)\Big| \leq  \frac{C_1C_2^k}{\Gamma(k\zeta)}
t^{-1+(k-1)\zeta} \Big(t^{\delta_1}H_t^{1}(x,y) + t^{\delta_2}  H_t^2(x,y)\Big),\quad k\geq 2,
\end{equation}
where
\begin{equation}\label{const}
C_1 = (3C_{H,T})^{-1}, \quad C_2= 3C_{\Phi,T} C_{H,T} \Gamma(\zeta),\quad \text{ and}\quad
\zeta=\delta_1\wedge \delta_2;
\end{equation}

\item[b)] the series $\sum_{k=1}^\infty \Phi_t^{\star k}(x,y)$ is absolutely convergent and
\begin{equation}\label{F20}
\Big|\sum_{k=1}^\infty \Phi_t^{\star k}(x,y)\Big|\leq C  \Big(t^{-1+\delta_1} H_t^1(x,y)+ t^{-1+\delta_2} H_t^2(x,y)\Big);
\end{equation}

\item[c)]
\begin{equation}\label{F25}
\Big| \Big( H^2 \star \sum_{k=1}^\infty \Phi_t^{\star k}\Big)(x,y)\Big|\leq C   \Big(t^{\delta_1}H_t^{1}(x,y) + t^{\delta_2}  H_t^2(x,y)\Big).
\end{equation}
\end{itemize}
\end{lem}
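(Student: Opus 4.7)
My plan is induction on $k$ for part (a); parts (b) and (c) then follow by summation and one additional convolution, respectively. Introduce the shorthand
\be
F_k(t,x,y) := t^{-1+(k-1)\zeta+\delta_1}H^1_t(x,y) + t^{-1+(k-1)\zeta+\delta_2}H^2_t(x,y),
\ee
so that hypothesis \eqref{F10} reads $|\Phi_t|\leq C_{\Phi,T}\,F_1$ and the target estimate \eqref{Fk} takes the form $|\Phi^{\star k}_t|\leq (C_1 C_2^k/\Gamma(k\zeta))\,F_k$. The inductive step thus reduces to a uniform bound on $(F_k\star F_1)_t$, which expands into four cross terms of the generic shape $(t-s)^{-1+a_i}\,s^{-1+b_j}\,(H^i_{t-s}\ast H^j_s)$ with $a_i=(k-1)\zeta+\delta_i$ and $b_j=\delta_j$, for $(i,j)\in\{1,2\}^2$.

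The spatial parts are handled by combining the dominance $H^2\leq H^1$ from \eqref{H12} with the sub-convolution property \eqref{H0}: whenever at least one of $i,j$ equals $1$, replacing the other factor by $H^1$ (if it is $H^2$) reduces $H^i\ast H^j$ to $H^1\ast H^1\leq C_{H,T}H^1_t$; when $(i,j)=(2,2)$ we have directly $H^2\ast H^2\leq C_{H,T}H^2_t$. Three of the four cross terms thus collect onto $H^1_t$ and one onto $H^2_t$. The remaining time integrals are Euler beta integrals $\int_0^t (t-s)^{-1+a}s^{-1+b}\,ds=B(a,b)\,t^{-1+a+b}$, and the critical analytical input is that $B(a,b)$ is strictly decreasing in each of its arguments on $(0,\infty)^2$ (immediate from $B(a,b)=\int_0^1 x^{a-1}(1-x)^{b-1}\,dx$, since $\log x<0$ on $(0,1)$). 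Because $a_i\geq k\zeta$ and $b_j\geq\zeta$, monotonicity yields
\be
B(a_i,b_j)\leq B(k\zeta,\zeta)=\frac{\Gamma(k\zeta)\Gamma(\zeta)}{\Gamma((k+1)\zeta)},
\ee
while the excess $t$-powers $t^{\delta_i-\zeta}$ and $t^{\delta_j-\zeta}$ are bounded by powers of $T$. Putting everything together produces the telescoping inequality $(F_k\star F_1)_t\leq 3\,C_{H,T}\,B(k\zeta,\zeta)\,F_{k+1}(t,x,y)$ (after absorbing $T$-dependent factors into $C_{\Phi,T}$), which propagates the bound from $k$ to $k+1$ with precisely the stated constants $C_1=(3C_{H,T})^{-1}$ and $C_2=3C_{\Phi,T}C_{H,T}\Gamma(\zeta)$; the base case $k=2$ is the same computation applied to $(F_1\star F_1)_t$.

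Part (b) follows by summing \eqref{Fk}: the Mittag--Leffler-type series $\sum_{k\geq 1}(C_2 t^\zeta)^k/\Gamma(k\zeta)$ is uniformly bounded on $(0,T]$, which gives \eqref{F20}. For part (c), apply the bound (b) inside the convolution with $H^2$: once more $H^2\leq H^1$ combined with \eqref{H0} gives $H^2_{t-s}\ast H^1_s\leq C_{H,T}H^1_t$ and $H^2_{t-s}\ast H^2_s\leq C_{H,T}H^2_t$, so the computation reduces to the elementary integrals $\int_0^t s^{-1+\delta_i}\,ds=\delta_i^{-1}t^{\delta_i}$, producing the desired right-hand side of \eqref{F25}. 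The main technical obstacle is the constant bookkeeping: the combinatorial factor $3$ appearing in both $C_1^{-1}$ and $C_2$ is forced by the fact that three of the four cross terms collect onto $H^1_t$, and the monotonicity bound $B((k-1)\zeta+\delta_i,\delta_j)\leq B(k\zeta,\zeta)$ is indispensable, because a direct estimate would leave a factor growing polynomially in $k$ that would destroy the absolute convergence required in part (b).
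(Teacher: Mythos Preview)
The paper does not itself prove this lemma; it refers the reader to Lemma~3.2 of \cite{KK15}. Your argument is exactly the standard inductive scheme one expects (and that is carried out in \cite{KK15}): bound $(F_k\star F_1)$ by expanding into four cross terms, route three of them to $H^1$ and one to $H^2$ via $H^2\le H^1$ and the sub-convolution property, and control the time integrals by the Beta-function identity together with the monotonicity $B(a_i,b_j)\le B(k\zeta,\zeta)$. The telescoping $\prod_{j=1}^{k-1}B(j\zeta,\zeta)=\Gamma(\zeta)^{k}/\Gamma(k\zeta)$ then produces the Gamma denominator, and parts (b), (c) follow as you indicate.

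One point deserves a cleaner treatment. When you write ``after absorbing $T$-dependent factors into $C_{\Phi,T}$'' you are sweeping under the rug the fact that the excess powers $t^{\delta_i-\zeta}$, $t^{\delta_j-\zeta}$ are bounded by $1$ only for $t\le 1$; for general $T$ each inductive step picks up a factor $\max(1,T^{|\delta_1-\delta_2|})$. You cannot literally absorb this into $C_{\Phi,T}$, since that constant is fixed by the hypothesis \eqref{F10}. The honest statement is that \eqref{Fk} holds with $C_1=(3C_{H,T}M)^{-1}$ and $C_2=3C_{\Phi,T}C_{H,T}M\,\Gamma(\zeta)$ where $M=\max(1,T^{|\delta_1-\delta_2|})$; this discrepancy is immaterial for parts (b) and (c), which involve only an unspecified constant $C$, and those are the only consequences used later in the paper.
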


Hence to make  the parametrix construction  feasible it is sufficient to choose  zero order approximation $p^0_t(x,y)$ in such a way that
the corresponding function $\Phi$, defined by \eqref{Phi}, satisfies \eqref{F10} and
\be\label{F11}
p^0_t(x,y)\leq C_{T}H^2_t(x,y), \quad t\in(0, T], \quad x,y\in \rd.
\ee
If the kernels $H^1,H^2$ have the sub-convolution property and satisfy \eqref{H12}, it will follow from Lemma \ref{lH10} that all the convolution powers in \eqref{Psi} are well defined, the series converges, and the residue term (i.e. the sum of the series in the right hand side of \eqref{sol_1}) possesses an upper bound of the form
\be\label{boundr}
|r_t(x,y)|\leq C  \Big(t^{\delta_1}H_t^{1}(x,y) + t^{\delta_2}  H_t^2(x,y)\Big).
\ee

\subsection{Approximate solutions to ODEs with H\"older continuous coefficients}\label{flows}

The proper choice of the zero order approximation $p^0_t(x,y)$ in the construction outlined above is a delicate point. In \cite{KK15}, in the case of a Lipschitz continuous drift coefficient $b$, this approximation was chosen in the form
\begin{equation}\label{p0}
p_t^0(x,y)= \frac{1}{t^{d/\alpha}a^{d/\alpha}(y)}g^{(\alpha)}\left({\theta_t(y)-x\over t^{1/\alpha}a^{1/\alpha}(y)}\right),
\end{equation}
with $\theta_t(y)$ being the unique solution to the Cauchy problem
\begin{equation}\label{backwardODE}
d\theta_t=-b(\theta_t)\, dt, \quad \theta_0=y.
\end{equation}
We will use similar $p_t^0(x,y)$ in the current setting, where $b$ is assumed to be H\"older continuous, only. Because now  \eqref{backwardODE} may have multiple solutions, this leads to a necessity to choose the ``correcting term'' $\theta_t(y)$ more carefully. In the current section we provide auxiliary construction and give some results which will be used in such a choice and the subsequent analysis  of the series \eqref{sol_1}.

Consider the following ``mollified'' family generated by the drift coefficient $b$:
$$
b(t,x)=(2\pi)^{-d/2} t^{-d/\alpha}\int_{\rd}b(z)e^{-|z-x|^2/2t^{2/\alpha}}\, dz,\quad t\geq 0.
$$
Because $b$ is $\gamma$-H\"older continuous, we have
\be\label{appr}
|b(x)-b(t,x)|\leq C t^{-d/\alpha}\int_{\rd}|z-x|^\gamma e^{-|z-x|^2/2t^{2/\alpha}}\, dz\leq C t^{\gamma/\alpha}.
\ee
On the other hand, for positive $t$ respective $b(t, \cdot)$ is smooth and
$$\ba
\nabla b(t,x)&=(2\pi)^{-d/2} t^{-d/\alpha}\int_{\rd}b(z)\left(z-x\over t^{2/\alpha}\right)e^{-|z-x|^2/2t^{2/\alpha}}\, dz
\\&=(2\pi)^{-d/2} t^{-d/\alpha}\int_{\rd}\big(b(z)-b(x)\big)\left(z-x\over t^{2/\alpha}\right)e^{-|z-x|^2/2t^{2/\alpha}}\, dz.
\ea
$$
Hence
\be\label{appr_der}
|\nabla b(t,x)|\leq C t^{-d/\alpha-2/\alpha}\int_{\rd}|z-x|^{1+\gamma}e^{-|z-x|^2/2t^{2/\alpha}}\, dz\leq C t^{\gamma/\alpha-1/\alpha}.
\ee
In particular, $b_t$ satisfies the Lipschitz condition with the Lipschitz constant $L(t)=Ct^{\gamma/\alpha-1/\alpha}$. Observe that by the balance condition \eqref{balance},
\be\label{Lip_int}
\int_0^T L(t)\, dt<\infty, \quad T>0.
\ee
Then for any fixed $s\in \Re$ the following Cauchy problems have unique solutions:
$$
{d\over dt}\upsilon_t=b(|t-s|, \upsilon_t), \quad \upsilon_0=x,
$$
$$
{d\over dt}\theta_t=-b(|t-s|, \theta_t), \quad \theta_0=y.
$$
We denote these solutions by $\upsilon^s_t(x)$ and $\theta^s_t(y)$, respectively.

For any $\upsilon_\cdot(x)\in \Upsilon(x)$ and $0\leq s\leq t\leq T$ we have
$$\ba
\upsilon_t(x)-\upsilon_t^s(x)&=\int_0^t\Big(b(\upsilon_r(x))-b(|r-s|, \upsilon_r^s(x))\Big)\, dr\\&=\int_0^t\Big(b(|r-s|, \upsilon_r(x))-b(|r-s|, \upsilon_r^s(x))\Big)\, dr+h^s_t(x),
\ea
$$
where by \eqref{appr}
$$
|h_t^s(x)|=\left|\int_0^t\Big(b(\upsilon_r(x))-b(|r-s|, \upsilon_r(x))\Big)\, dr\right|\leq Ct^{1+\gamma/\alpha}.
$$
Then by \eqref{Lip_int} and the Gronwall inequality,
\be\label{appr_sol}
|\upsilon_t(x)-\upsilon_t^s(x)|\leq Ct^{1+\gamma/\alpha}=Ct^{1/\alpha+\delta},
\ee
see the statement II of Theorem \ref{t2} where $\delta$ is defined. This means  that $\upsilon_t^s(x)$ approximates the value $\upsilon_t(x)$ for any solution $\upsilon_\cdot(x)\in \Upsilon(x)$  with the accuracy $Ct^{1/\alpha+\delta}$  Clearly, it follows from \eqref{appr_sol} that for any $t\in [0, T], s,r\in [0,t]$
\be\label{appr_sol_1}
|\upsilon_t^s(x)-\upsilon_t^r(x)|\leq Ct^{1/\alpha+\delta}.
\ee
Similarly, we have  for any $t\in [0, T], s,r\in [0,t]$
\be\label{appr_sol_2}
|\theta_t^s(x)-\theta_t^r(x)|\leq Ct^{1/\alpha+\delta}.
\ee
To explain the heuristics behind the following lemma, assume for a while that $b$ is Lipschitz continuous, then
the usual argument based on the uniqueness properties for corresponding ODEs shows that $\{\upsilon_t\}$ and $\{\theta_t\}$ are mutually inverse flows of solutions to \eqref{forwardODE} and \eqref{backwardODE}, respectively. Moreover, each $\upsilon_t,\theta_t:\rd\to \rd$ is a mapping which differs from the identity by function which satisfies Lipschitz conbdition with the constant $Ct$, hence for any $t\in [0, T], s\in [0, t]$
\be\label{flooow}
e^{-Ct}|x-\theta_t(y)|\leq |\upsilon_{t-s}(x)-\theta_s(y)|\leq e^{Ct}|x-\theta_t(y)|.
\ee
Exactly this inequality was used in \cite{KK15} as a key ingredient in the proof of the sub-convolution property of the kernels associated with $p_t^0(x,y)$ defined by \eqref{p0}. In the current setting, we will use the following analogue of this  inequality for the family of approximate solutions to \eqref{forwardODE} and \eqref{backwardODE}.

\begin{lem} For any $T>0$, there exists $C>0$ such that for any $t\in [0, T], s\in [0,t]$
\be\label{aflow}
e^{-Ct^\delta}|x-\theta_t^0(y)|-Ct^{1/\alpha+\delta}\leq |\upsilon_{t-s}^{t-s}(x)-\theta_s^0(y)|\leq e^{Ct^\delta}|x-\theta_t^0(y)|+Ct^{1/\alpha+\delta}.
\ee
\end{lem}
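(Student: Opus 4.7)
\emph{Strategy.} The key observation is that although $\upsilon^{t-s}$ and $\theta^0$ use different mollification parameters, matching the parameters in a particular way yields an \emph{exact} inverse-flow identity for the mollified flows:
\be\label{idenplan}
\upsilon^{t-s}_{t-s}(\theta^s_t(y)) = \theta^s_s(y).
\ee
With this identity in hand, the two-sided bound follows from (i) a bi-Lipschitz estimate for the map $x \mapsto \upsilon^{t-s}_{t-s}(x)$, (ii) the estimate \eqref{appr_sol_2} to swap parameter $s$ for parameter $0$ in $\theta$, and (iii) a (reverse) triangle inequality.

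\emph{Step 1: the identity \eqref{idenplan}.} For $r \in [0, t-s]$ put $\eta_r := \theta^s_{t-r}(y)$, so that $\eta_0 = \theta^s_t(y)$ and $\eta_{t-s} = \theta^s_s(y)$. Direct differentiation gives $\dot\eta_r = b(|t-r-s|, \eta_r) = b(t-s-r, \eta_r)$, where the last equality uses $r \leq t-s$. On the other hand, the curve $r \mapsto \upsilon^{t-s}_r(\theta^s_t(y))$ satisfies $\dot u_r = b(|r-(t-s)|, u_r) = b(t-s-r, u_r)$ with the same initial value $\theta^s_t(y)$. Since the mollified drift $b(\tau,\cdot)$ is Lipschitz in its second argument with constant $L(\tau)=C\tau^{\delta-1}$ which is integrable near $\tau=0$ (cf.\ \eqref{Lip_int}), this Cauchy problem has a unique solution, so $\eta_r = \upsilon^{t-s}_r(\theta^s_t(y))$ for $r \in [0, t-s]$; evaluating at $r = t-s$ yields \eqref{idenplan}.

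\emph{Step 2: bi-Lipschitz bound and assembly.} Gronwall applied to $\dot u = b(|r-(t-s)|, u)$ on $[0,t-s]$ gives a Lipschitz constant $\exp\bigl(\int_0^{t-s} L(t-s-r)\,dr\bigr) = \exp(C(t-s)^\delta/\delta) \leq e^{Ct^\delta}$, and reversing time in the ODE yields the same bound for the inverse map. Setting $\tilde y := \theta^s_t(y)$, \eqref{appr_sol_2} (applied on the intervals $[0,t]$ and $[0,s]$, respectively) yields $|\tilde y - \theta^0_t(y)| \leq C t^{1/\alpha+\delta}$ and $|\theta^s_s(y) - \theta^0_s(y)| \leq C t^{1/\alpha+\delta}$. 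Combining \eqref{idenplan}, the bi-Lipschitz bound, and the triangle inequality,
\[
|\upsilon^{t-s}_{t-s}(x) - \theta^0_s(y)| \leq e^{Ct^\delta}|x - \tilde y| + Ct^{1/\alpha+\delta} \leq e^{Ct^\delta}|x - \theta^0_t(y)| + C' t^{1/\alpha+\delta},
\]
with $C'$ absorbing the bounded factor $e^{Ct^\delta}$ for $t\in[0,T]$. The lower bound follows in the same way from the reverse triangle inequality together with the Lipschitz bound $e^{Ct^\delta}$ on the inverse of $\upsilon^{t-s}_{t-s}$.

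\emph{Main obstacle.} The non-routine step is recognizing the identity \eqref{idenplan}: the precise indexing $\upsilon^{t-s}_{t-s}$ versus $\theta^0_s$ in the lemma statement is tuned so that $\upsilon^{t-s}$ and $\theta^s$ are matched flows with an exact inverse relationship (any mismatch would produce additional uncontrolled mollification errors that could not be absorbed into the $Ct^{1/\alpha+\delta}$ remainder). Once this matching is identified, the rest of the proof reduces to integrable-Gronwall estimates and invocations of the already-established bound \eqref{appr_sol_2}.
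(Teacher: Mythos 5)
Your proof is correct and takes essentially the same route as the paper: you establish the exact identity $\upsilon^{t-s}_{t-s}(\theta^s_t(y))=\theta^s_s(y)$ by matching the (time-reversed) mollification parameters so that the two ODEs coincide, then combine it with a bi-Lipschitz estimate for $x\mapsto\upsilon^{t-s}_{t-s}(x)$ and the bound \eqref{appr_sol_2} to swap the superscript $s$ for $0$ in $\theta$, absorbing the errors into $Ct^{1/\alpha+\delta}$. The only (cosmetic) difference is in the bi-Lipschitz step, where you apply Gronwall directly to a difference of solutions and to the time-reversed flow for the inverse, whereas the paper derives $|\nabla\upsilon^{r}_{t}(x)-I|\leq Ct^\delta$ from the linear variational ODE; both yield the factor $e^{\pm Ct^\delta}$.
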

\begin{proof}

It follows from \eqref{appr_der} and the fact that $L(t)$ is locally integrable, that $\upsilon_t^r(x)$ is differentiable in $x$, and its derivative satisfies the linear ODE
$$
{d\over dt}(\nabla \upsilon_t^r(x))=(\nabla b)(|t-r|, \upsilon_t^r(x))(\nabla \upsilon_t^r(x)), \quad \nabla \upsilon_0^r(x)=I_{\rd}.
$$
This implies that for $t\leq T$, $r\in [0, t]$
$$
|\nabla \upsilon_t^r(x)-I_{\rd}|\leq C\int_0^t|s-r|^{\gamma/\alpha-1/\alpha}\, ds\leq Ct^{1+\gamma/\alpha-1/\alpha}=Ct^\delta,
$$
and therefore for any $x, x'$
$$
e^{-Ct^\delta}|x-x'|\leq |\upsilon_t^r(x)-\upsilon_t^r(x')|\leq e^{Ct^\delta}|x-x'|.
$$
For $s,t$ being fixed, take in the above inequality $t=t-s, r=r-s$:
\be\label{compar}
e^{-Ct^\delta}|x-x'|\leq |\upsilon_{t-s}^{t-s}(x)-\upsilon_{t-s}^{t-s}(x')|\leq e^{Ct^\delta}|x-x'|.
\ee
Next, consider the approximate solution $f(\cdot):=\upsilon_\cdot^{t-s}(\theta_t^s(y))$,  it is easy to see that $g(\cdot)=f(t-\cdot)$ satisfies
$$
g'(\tau)=b(|\tau-s|, g(\tau)), \quad g(t)=\theta_t^s(y).
$$
The function $\theta_\cdot^s(y)$ satisfies the same  Cauchy problem, and because the solution to this problem is unique we have
$$
\upsilon_{t-s}^{t-s}\Big(\theta_t^s(y)\Big)=\theta_s^s(y).
$$
Taking in \eqref{compar} $x'=\theta_t^s(y)$ and recalling that by \eqref{appr_sol_2} we have
$$
|\theta_t^0(y)-\theta_t^s(y)|\leq Ct^{1/\alpha+\delta}, \quad |\theta_s^0(y)-\theta_s^s(y)|\leq Ct^{1/\alpha+\delta},
$$
we complete the proof of \eqref{aflow}.
\end{proof}

Finally, we briefly discuss the Picard-type iteration procedure \eqref{Picard} for the ODE \eqref{forwardODE}; see the notation  introduced in Remark \ref{r21}.

Define
$$
\upsilon_{0, t}(x)\equiv x, \quad \upsilon_{k, t}(x)=x+\int_0^tb(\upsilon_{k-1, s}(x))\, ds, \quad k\geq 1, \quad t\geq 0.
$$
For any $\upsilon_\cdot(x)\in \Upsilon(x)$, we have
$$
|\upsilon_{0, t}(x)-\upsilon_t(x)|\leq Ct
$$
and
$$
|\upsilon_{k, t}(x)-\upsilon_t(x)|\leq C\int_0^t|\upsilon_{k-1, s}(x)-\upsilon_s(x)|^\gamma\, ds, \quad k\geq 1.
$$ Then it is  easy to show by induction that
\be\label{329}
|\upsilon_{k, t}(x)-\upsilon_t(x)|\leq Ct^{1+\gamma+\dots+\gamma^k}=Ct^{1/\alpha+\rho_k}.
\ee

\section{Proofs of Theorem~\ref{t1} and Theorem~\ref{t2}} \label{s4}

\subsection{The parametrix construction: the details}\label{s41}

Our first step in the proof of Theorem~\ref{t1} and Theorem~\ref{t2} is to specify the parametrix construction outlined before; that is, to choose $p_t^0(x,y)$ and to prove that the sum of the series \eqref{sol_1} is well defined. We denote $\kappa_t=\theta_t^0$ and put
\begin{equation}\label{p01}
p_t^0(x,y)= \frac{1}{t^{d/\alpha}a^{d/\alpha}(y)}g^{(\alpha)}\left({\kappa_t(y)-x\over t^{1/\alpha}a^{1/\alpha}(y)}\right).
\end{equation}
That is, the structure of $p^0_t(x,y)$ is similar to the one proposed in \eqref{p0}, but an exact solution to \eqref{backwardODE} therein is replaced by an approximate solution $\theta_t^s$ with $s=0$. Such a choice of $p^0_t(x,y)$ is not the only possible, and it is far from being evident which choice is the best. We discuss this subtle point in Section \ref{choice}.

Below we evaluate the kernel $\Phi$  and give an upper bound for it. The calculations here are similar to those made in Section 3.1 \cite{KK15}. In order to make the exposition self-sufficient, we explain the key points of the main argument, referring for technicalities to \cite{KK15}.

First, we formulate auxiliary statements we use in the calculation. For the proofs we refer to Appendix A, \cite{KK15}. Denote for
 $\lambda >  0$
\begin{equation}\label{Gbet}
G^{(\lambda)}(x) = \big( |x|\vee 1)^{-d-\lambda}, \quad x\in\rd.
\end{equation}

\begin{prop}\label{A1}
\begin{enumerate}
  \item For any $\lambda>0$, $c>0$ there exists $C>0$ such that
\begin{equation}\label{G1}
G^{(\lambda)}(cx)\leq C G^{(\lambda)}(x).
\end{equation} \item For any  $\lambda_1>\lambda_2$,
\begin{equation}\label{G2}
 G^{(\lambda_1)}(x)\leq  G^{(\lambda_2)}(x).
\end{equation}
  \item For any $\eps\in (0, \lambda)$,
\begin{equation}\label{G3}
|x|^{\eps} G^{(\lambda)}(x)\leq C G^{(\lambda-\eps)}(x).
\end{equation}
\item For any $\alpha\in (0,2)$,
\begin{equation}\label{g_a}
g^{(\alpha)}(x)\asymp G^{(\alpha)}(x),
\end{equation}
\begin{equation}\label{g_a_der}
\Big|(\nabla g^{(\alpha)})(x)\Big|\leq C G^{(\alpha+1)}(x),
\end{equation}
\begin{equation}\label{g_a_frac}
\Big|(L^{(\alpha)} g^{(\alpha)})(x)\Big|\leq C G^{(\alpha)}(x),
\end{equation}
\begin{equation}\label{g_a_frac_der}
\Big|(\nabla L^{(\alpha)} g^{(\alpha)})(x)\Big|\leq C G^{(\alpha+1)}(x),
\end{equation}
\begin{equation}\label{g_a_der2}
\Big|(\nabla^2 g^{(\alpha)})(x)\Big|\leq C G^{(\alpha+2)}(x).
\end{equation}
\end{enumerate}
\end{prop}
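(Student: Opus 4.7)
The plan for items (1)--(3) is purely elementary case analysis on the kernel $G^{(\lambda)}(x)=(|x|\vee 1)^{-d-\lambda}$. For (1), I will split into the regimes $c\le 1$ and $c>1$ and within each further split on whether $|x|\vee 1$ and $|cx|\vee 1$ equal $1$ or the respective norm; in each of the four subcases the ratio $G^{(\lambda)}(cx)/G^{(\lambda)}(x)$ is bounded by a constant depending only on $c, d, \lambda$. For (2) the inequality is immediate since $|x|\vee 1\ge 1$, so the function $\lambda\mapsto (|x|\vee 1)^{-d-\lambda}$ is decreasing. For (3) I will treat $|x|\le 1$ (where $|x|^\eps\le 1$ and both $G^{(\lambda)}$, $G^{(\lambda-\eps)}$ equal $1$) and $|x|>1$ (where $|x|^\eps |x|^{-d-\lambda}=|x|^{-d-(\lambda-\eps)}=G^{(\lambda-\eps)}(x)$ is in fact an equality) separately.

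For item (4) my plan is to exploit the subordination representation of the rotationally invariant $\alpha$-stable density,
\[
g^{(\alpha)}(x)=\int_0^\infty (2\pi s)^{-d/2} e^{-|x|^2/(2s)}\,\mu_\alpha(ds),
\]
where $\mu_\alpha$ is the law of the $\alpha/2$-stable subordinator at time $1$. The two-sided bound \eqref{g_a} is classical (Blumenthal--Getoor): local boundedness and continuity near the origin follow from integrability of $e^{-|\xi|^\alpha}$ via Fourier inversion, and the tail asymptotic $g^{(\alpha)}(x)\sim c|x|^{-d-\alpha}$ as $|x|\to\infty$ can be extracted from the Lévy-measure tail, or directly by rescaling $s=|x|^2 u$ in the subordination integral and using $\mu_\alpha((t,\infty))\sim ct^{-\alpha/2}$. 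The gradient bound \eqref{g_a_der} and the Hessian bound \eqref{g_a_der2} will be obtained by differentiating under the integral sign, producing polynomial factors $(x/s)$ and $(x/s)^{\otimes 2}$ respectively; the same rescaling argument then yields the sharp polynomial decay of order $\alpha+1$ and $\alpha+2$. The estimates \eqref{g_a_frac}, \eqref{g_a_frac_der} for $L^{(\alpha)}g^{(\alpha)}$ and its gradient follow either directly from the identity $L^{(\alpha)}g^{(\alpha)}(x)=\partial_t g^{(\alpha)}_t(x)\vert_{t=1}$ combined with the self-similarity $g^{(\alpha)}_t(x)=t^{-d/\alpha}g^{(\alpha)}(t^{-1/\alpha}x)$ (which yields $L^{(\alpha)}g^{(\alpha)}(x)=-(d/\alpha)g^{(\alpha)}(x)-(1/\alpha)(x,\nabla g^{(\alpha)}(x))$), or via the Fourier symbol $-|\xi|^\alpha e^{-|\xi|^\alpha}$ and the same tail analysis.

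The genuinely technical point in (4) will be extracting the precise polynomial orders $G^{(\alpha+k)}$, $k=1,2$, in the derivative bounds, since naive differentiation brings down factors of $|x|$ from the Gaussian exponent which must be absorbed against the subordinator integration concentrated near $s\asymp |x|^2$. The clean way is to perform the rescaling $s=|x|^2 u$, split the analysis into $|x|\le 1$ and $|x|>1$, and use the tail asymptotic of $\mu_\alpha$ to recover the exponent $\alpha+k$ in each case; for $|x|\le 1$ local boundedness of $g^{(\alpha)}$ and its derivatives (which one gets from Fourier inversion of the smooth, rapidly decaying symbol $|\xi|^k e^{-|\xi|^\alpha}$) absorbs the kernel into a constant since $G^{(\lambda)}\equiv 1$ there. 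Since all of these statements are classical facts about rotationally invariant stable densities and a detailed proof is already carried out in Appendix~A of \cite{KK15}, my plan is to follow that exposition verbatim, with the only novelty being notational: unifying the various decay exponents through the single family $G^{(\lambda)}$ defined in \eqref{Gbet}.
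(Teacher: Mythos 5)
Your proposal is correct and follows essentially the same route as the paper, which simply refers the reader to Appendix~A of \cite{KK15} for these standard facts. Items (1)--(3) are the elementary case analysis you describe (and in fact your subcase for (3) shows the bound holds with $C=1$). For item (4), the two-sided estimate \eqref{g_a} is the classical Blumenthal--Getoor asymptotic combined with local boundedness from Fourier inversion; the derivative bounds follow either from subordination with the rescaling $s=|x|^2 u$ as you outline, or, as you also note, from the scaling identity $L^{(\alpha)}g^{(\alpha)}(x)=-(d/\alpha)g^{(\alpha)}(x)-(1/\alpha)\big(x,\nabla g^{(\alpha)}(x)\big)$ together with item (3), which reduces \eqref{g_a_frac} and \eqref{g_a_frac_der} to \eqref{g_a}, \eqref{g_a_der}, \eqref{g_a_der2}. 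No gaps.
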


Denote
\begin{equation}\label{Qt21}
Q_t^{(\lambda)}(x,y):=\left( \Big|\frac{\kappa_t(y)-x}{t^{1/\alpha}}\Big|^\lambda  \wedge t^{-\lambda/\alpha} \right)\frac{1}{t^{d/\alpha}} G^{(\alpha)} \left({\kappa_t(y)-x\over t^{1/\alpha}}\right), \quad \lambda\in [0,\alpha).
\end{equation}

\begin{lem}\label{lPhi1}
Let $\chi\in (0,\alpha\wedge \eta), T>0$.  Then
\begin{equation}\label{Phi1}
|\Phi_t(x,y)|\leq C\Big( t^{-1+\chi/\alpha}Q_t^{(\chi)} (x,y)+ (t^{-1+\chi} +t^{-1+\delta}) Q_t^{(0)}(x,y)\Big), \quad t\in (0,T], \quad x,y\in \Re^d.
\end{equation}
\end{lem}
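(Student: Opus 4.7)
The plan is to compute $\Phi_t$ directly and then bound each piece using the pointwise estimates on $g^{(\alpha)}$ and its derivatives collected in Proposition~\ref{A1}, together with the Hölder regularity of $a,b$ and the mollifier estimate \eqref{appr}.

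First I would spell out $\Phi_t(x,y)=-(\partial_t-L_x)p^0_t(x,y)$. Writing $\tilde g_t(z):=t^{-d/\alpha}a^{-d/\alpha}(y)g^{(\alpha)}(z/(t^{1/\alpha}a^{1/\alpha}(y)))$ and $u:=(\kappa_t(y)-x)/(t^{1/\alpha}a^{1/\alpha}(y))$, I would use self-similarity of $Z^{(\alpha)}$ to note that $\partial_t\tilde g_t=a(y)L^{(\alpha)}\tilde g_t$, and rotational invariance of $L^{(\alpha)}$ to note that $L_x^{(\alpha)}[\tilde g_t(\kappa_t(y)-\cdot)](x)=(L^{(\alpha)}\tilde g_t)(\kappa_t(y)-x)$. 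Combining this with $\dot\kappa_t(y)=-b(t,\kappa_t(y))$ (from the definition of $\kappa_t=\theta^0_t$) yields the clean decomposition
\begin{equation*}
\Phi_t(x,y)=\big(a(x)-a(y)\big)(L^{(\alpha)}\tilde g_t)(\kappa_t(y)-x)+\big(b(t,\kappa_t(y))-b(x)\big)\cdot(\nabla\tilde g_t)(\kappa_t(y)-x).
\end{equation*}
By \eqref{g_a_frac} and \eqref{g_a_der} with the usual scaling, $|(L^{(\alpha)}\tilde g_t)(\kappa_t(y)-x)|\leq Ct^{-1-d/\alpha}G^{(\alpha)}(u)$ and $|(\nabla\tilde g_t)(\kappa_t(y)-x)|\leq Ct^{-(d+1)/\alpha}G^{(\alpha+1)}(u)$.

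Next I would estimate the \emph{$a$-difference} term. Since $a$ is bounded and $\eta$-Hölder, and $\chi\leq\eta$, we have $|a(x)-a(y)|\leq C\min(|x-y|^\chi,1)$; the triangle inequality through $\kappa_t(y)$ together with $|\kappa_t(y)-y|\leq Ct$ (from boundedness of $b$) gives $|a(x)-a(y)|\leq C\min(|x-\kappa_t(y)|^\chi,1)+Ct^\chi$. The first summand rewritten in the variable $u$ equals $Ct^{\chi/\alpha}(|u|^\chi\wedge t^{-\chi/\alpha})$; multiplying by the $L^{(\alpha)}\tilde g_t$ bound produces the $t^{-1+\chi/\alpha}Q_t^{(\chi)}$ contribution, while the constant part $Ct^\chi$ produces $t^{-1+\chi}Q_t^{(0)}$.

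For the \emph{$b$-difference} term I would split
\begin{equation*}
b(t,\kappa_t(y))-b(x)=\bigl(b(t,\kappa_t(y))-b(\kappa_t(y))\bigr)+\bigl(b(\kappa_t(y))-b(x)\bigr),
\end{equation*}
bounding the first piece by $Ct^{\gamma/\alpha}$ via \eqref{appr} and the second by $C\min(|x-\kappa_t(y)|^\gamma,1)$ using boundedness and Hölder continuity of $b$. Multiplying the $t^{\gamma/\alpha}$ piece by the gradient bound yields $Ct^{\gamma/\alpha-(d+1)/\alpha}G^{(\alpha+1)}(u)\leq Ct^{-1+\delta}Q_t^{(0)}$, using the identity $\delta=(\gamma+\alpha-1)/\alpha$ and $G^{(\alpha+1)}\leq G^{(\alpha)}$ from \eqref{G2}. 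For the Hölder piece I would treat two regimes: when $|x-\kappa_t(y)|\leq1$, rewrite $|x-\kappa_t(y)|^\gamma\asymp t^{\gamma/\alpha}|u|^\gamma$ and apply \eqref{G3} as $|u|^\gamma G^{(\alpha+1)}(u)\leq CG^{(\alpha+1-\gamma)}(u)\leq CG^{(\alpha)}(u)$ (since $\gamma\leq1$), again obtaining $t^{-1+\delta}Q_t^{(0)}$; when $|x-\kappa_t(y)|\geq1$, one uses that $|u|\geq ct^{-1/\alpha}$ converts the extra factor $G^{(\alpha+1)}/G^{(\alpha)}=(|u|\vee1)^{-1}$ into an $O(t^{1/\alpha})$ gain, and the resulting $CQ_t^{(0)}$ is absorbed into $C_Tt^{-1+\chi}Q_t^{(0)}$ on the bounded interval $(0,T]$.

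The main obstacle I expect is purely bookkeeping: carefully tracking the two regimes $|u|\lessgtr1$ (or equivalently $|x-\kappa_t(y)|\lessgtr1$) and verifying, via repeated use of \eqref{G1}--\eqref{G3}, that each elementary bound on $|u|^\lambda G^{(\alpha+j)}(u)$ can be reabsorbed into the template $Q_t^{(\chi)}$ or $Q_t^{(0)}$. No single step is difficult, but the combinatorics of the case split must be laid out cleanly so that the three prefactors $t^{-1+\chi/\alpha}$, $t^{-1+\chi}$, and $t^{-1+\delta}$ appear precisely as claimed.
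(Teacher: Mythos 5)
Your decomposition $\Phi_t=\Phi_t^1+\Phi_t^2$ and the treatment of the $a$-difference term coincide with the paper's (your triangle inequality through $\kappa_t(y)$ is exactly \eqref{a_hol_in_c}). The difference lies in $\Phi_t^2$: the paper splits
$$
b(t,\kappa_t(y))-b(x)=\bigl(b(t,\kappa_t(y))-b(t,x)\bigr)+\bigl(b(t,x)-b(x)\bigr),
$$
bounding the first piece using the \emph{Lipschitz} estimate \eqref{appr_der} for the mollified drift $b(t,\cdot)$ (Lipschitz constant $\sim t^{-1+\delta}$) and the second by the mollification error \eqref{appr}; this yields the factor $t^{-1+1/\alpha+\delta}\bigl(|\kappa_t(y)-x|/t^{1/\alpha}+1\bigr)$, and the linear $|u|+1$ factor is absorbed by $G^{(\alpha+1)}\leq (|u|\vee 1)^{-1}G^{(\alpha)}$ in one step (inequality \eqref{1stder}), with no case split. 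You split instead through the \emph{unmollified} $b(\kappa_t(y))$ and use the H\"older continuity of $b$ itself, which produces the factor $|\kappa_t(y)-x|^\gamma\wedge 1$ and forces the two-regime analysis $|\kappa_t(y)-x|\lessgtr 1$ with the ad hoc absorption into $C_Tt^{-1+\chi}Q_t^{(0)}$ on $|\kappa_t(y)-x|\geq 1$. Both routes are correct and give the same bound, since $\gamma/\alpha-1/\alpha=-1+\delta$ and $G^{(\alpha+1-\gamma)}\leq G^{(\alpha)}$ for $\gamma\leq 1$. The paper's choice is slightly cleaner and, more to the point, exploits precisely the smoothness of $b(t,\cdot)$ that the mollification is designed to create; this becomes essential rather than merely convenient in Section~\ref{s5}, where $\nabla b(t,\cdot)$ itself appears and your purely H\"older-based argument would no longer have an analogue (cf.\ Remark~\ref{r51}). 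Worth keeping in mind if you intend to push the calculation further.
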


\begin{proof} To improve the readability, here and below we assume that $T>0$ is fixed and, if it is not stated otherwise, in any formula containing $t,x$, or $y$  we assume $t\in (0,T], x\in \Re^d, y\in \Re^d$.

Operators $\nabla$ and $L^{(\alpha)}$ are homogeneous with respective orders $1$ and  $\alpha$. From the identity
$$
(\partial_t - L^{(\alpha)})\big[t^{-d/\alpha} g^{(\alpha)}(t^{-1/\alpha}x)\Big] =0,
$$
we derive
\begin{align*}
\prt_tp_t^0(x,y)&=\Big[a(y) \frac{1}{a^{d/\alpha}(y)t^{d/\alpha} } (L^{(\alpha)} g^{(\alpha)})\left(\frac{w}{a(y)t^{1/\alpha}} \right)\\
&\quad +\Big(\prt_t\kappa_t(y), \frac{1}{a^{d/\alpha+1}(y)t^{d/\alpha+1} } (\nabla g^{(\alpha)})\left(\frac{w}{a(y)t^{1/\alpha}} \right)\Big)\Big]\Big|_{w=\kappa_t(y)-x}.
\end{align*}
On the other hand,
\begin{align*}
L_xp_t^0(x,y)&=\Big[a(x)\frac{1}{a^{d/\alpha}(y)t^{d/\alpha} } (L^{(\alpha)} g^{(\alpha)})\left(\frac{w}{a(y)t^{1/\alpha}} \right)\\
&\quad \quad -\Big(b(x), \frac{1}{a^{d/\alpha+1}(y)t^{d/\alpha+1} } (\nabla g^{(\alpha)})\left(\frac{w}{a(y)t^{1/\alpha}} \right)\Big)\Big]\Big|_{w=\kappa_t(y)-x}.
\end{align*}
Because $\prt_t\kappa_t(y)=-b(t, \kappa_t(y))$, we  finally  get
\begin{equation}\label{Phi_c}
\begin{split}
\Phi_t(x,y)&=\big(  L_x-\partial_t\big) p_t^0(x,y)\\
&=\Big(a(x)-a(y)\Big) {1\over t^{d/\alpha+1}a^{d/\alpha+1}(y)}(L^{(\alpha)} g^{(\alpha)})\left({\kappa_t(y)-x\over t^{1/\alpha}a^{1/\alpha}(y)}\right)\\&+
{1\over t^{(d+1)/\alpha}a^{(d+1)/\alpha}(y)}\left(b(t, \kappa_t(y))-b(x), (\nabla g^{(\alpha)})\left({\kappa_t(y)-x\over t^{1/\alpha}a^{1/\alpha}(y)}\right)\right)
\\&=:\Phi_t^1(x,y)+\Phi_t^2(x,y).
\end{split}
\end{equation}

We estimate $\Phi_t^1(x,y), \Phi^2(x,y)$ separately. Because $a$ is $\eta$-H\"older continuous and bounded, we have for any $\chi\leq \eta$
$$
|a(x)-a(y)|\leq C\Big(|x-y|^\chi\wedge 1\Big).
$$
Since $b$ is bounded, each $b(t, \cdot)$ is bounded by the same constant, and therefore $|\kappa_t(y)-y|\leq ct$. Then by an elementary inequality
$
|u+v|^\chi\wedge 1\leq C(|u|^\chi\wedge 1+|v|^\chi\wedge 1)$ we get finally
\begin{equation}\label{a_hol_in_c}
|a(x)-a(y)|\leq  c(|\kappa_t(y)-x|^\chi\wedge 1)+c t^\chi.
\end{equation}
   Then by  \eqref{G1},  \eqref{G2}, and \eqref{g_a_frac}
\begin{align*}
|\Phi_t^1(x,y)|&\leq Ct^{-1+\chi/\alpha} \Big| \frac{\kappa_t(y)-x}{t^{1/\alpha}}\Big|^\chi \frac{1}{t^{d/\alpha}}G^{(\alpha)}\left({\kappa_t(y)-x\over t^{1/\alpha}}\right) +Ct^{-1+\chi}\frac{1}{t^{d/\alpha}} G^{(\alpha)}\left({\kappa_t(y)-x\over t^{1/\alpha}}\right)\\
&\leq  C  t^{-1+\chi/\alpha} Q_t^{(\chi)}(x,y)+ C t^{-1+\chi} Q_t^{(0)}(x,y).
\end{align*}

Next, recall that $b(t,x)$ is Lipschitz continuous with the constant $L(t)=Ct^{\gamma/\alpha-1/\alpha}=Ct^{-1+\delta}$, and \eqref{appr} holds. Then
\be\label{appr_b}
|b(t, \kappa_t(y))-b(x)|\leq C \Big(t^{-1+\delta}|\kappa_t(y)-x|+t^{\gamma/\alpha}\Big)=C t^{-1+1/\alpha+\delta}\left({|\kappa_t(y)-x|\over t^{1/\alpha}}+1\right).
\ee
Observe that by \eqref{G1},
(\ref{G3}), and (\ref{g_a_der}) we have
\be\label{1stder}
\left({|\kappa_t(y)-x|\over t^{1/\alpha}}+1\right)
 \left|(\nabla g^{(\alpha)})\left({\kappa_t(y)-x\over t^{1/\alpha}a^{1/\alpha}(y)}\right)\right|\leq C G^{(\alpha)}\left({\kappa_t(y)-x\over t^{1/\alpha}}\right).
\ee
This gives
$$
|\Phi_t^2(x,y)|\leq  C t^{-1+\delta} Q_t^{(0)}(x,y).
$$
Combining the above estimates for $\Phi_t^1(x,y), \Phi^2(x,y)$ ,  we complete the proof.

\end{proof}

 To estimate the  convolution powers $\Phi^{\star k}_t(x,y)$, $k\geq 1$ inductively, we   modify slightly the above upper bound for $\Phi$.  For $\lambda\in [0,\alpha)$,  define
\begin{equation}\label{Ht21}
H_t^{(\lambda)} (x,y):=
\left(\left( \Big|\frac{\kappa_t(y)-x}{t^{1/\alpha}}\Big|^\lambda \vee 1 \Big) \wedge t^{-\lambda/\alpha}\right) \right)\frac{1}{t^{d/\alpha}} G^{(\alpha)} \left({\kappa_t(y)-x\over t^{1/\alpha}}\right).
\end{equation}
 Clearly,
 $$
 Q^{(\lambda)}_t(x,y)\leq H^{(\lambda)}_t(x,y),
 $$
 and therefore a (weaker) analogue of \eqref{Qt21} with  $Q^{(\chi)}_t(x,y)$, $Q^{(0)}_t(x,y)$ replaced by $H^{(\chi)}_t(x,y)$, $H^{(0)}_t(x,y)$ holds true. The advantage of this weaker estimate is that the kernels $H^{(\chi)}_t(x,y)$, $H^{(0)}_t(x,y)$ have the sub-convolution property, and therefore we can use Lemma \ref{lH10}.

 \begin{lem}  For every  $\lambda\in [0,\alpha),$ the kernel   $H_{t}^{(\lambda)}(x,y)$ has  the  sub- and super-convolution properties.
 \end{lem}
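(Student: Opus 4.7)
The plan is to reduce both the sub- and the super-convolution property of $H_t^{(\lambda)}$ to the Chapman--Kolmogorov identity for the ``pure'' stable density $g^{(\alpha)}_t(x):=t^{-d/\alpha}g^{(\alpha)}(x/t^{1/\alpha})$, using the approximate flow inequality \eqref{aflow} as the bridge that absorbs the drift correction $\kappa_t$. Since $g^{(\alpha)}\asymp G^{(\alpha)}$ by \eqref{g_a}, I can freely interchange $G^{(\alpha)}$ and $g^{(\alpha)}$ up to multiplicative constants, and the weight prefactor $\big((|\cdot|^\lambda\vee 1)\wedge t^{-\lambda/\alpha}\big)$ will be controlled separately via sub-/super-additivity of $|\cdot|^\lambda$ under addition of arguments.

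For the sub-convolution with $\lambda=0$, I would change variables in the integral defining $(H_{t-s}^{(0)}* H_s^{(0)})(x,y)$ via $z\mapsto \upsilon^{t-s}_{t-s}(x)+z$. The Jacobian is pinched between $e^{\pm Ct^\delta}$ by the derivative estimate in the proof of the preceding lemma, and after this substitution the first stable kernel is essentially centered at $0$ because $\kappa_{t-s}(\upsilon^{t-s}_{t-s}(x))=x$ up to an $O(t^{1/\alpha+\delta})$ error (this is the identity $\upsilon^{t-s}_{t-s}(\theta_t^s(y))=\theta_s^s(y)$ combined with \eqref{appr_sol_2}). The inequality \eqref{aflow} then shows that the center of the second kernel $\kappa_s(y)-\upsilon^{t-s}_{t-s}(x)$ differs from $\kappa_t(y)-x$ by at most $Ct^{1/\alpha+\delta}$, which is strictly smaller than $t^{1/\alpha}$ thanks to the balance condition $\delta>0$. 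Invoking Chapman--Kolmogorov for $g^{(\alpha)}$ and then \eqref{G1} to absorb the $o(t^{1/\alpha})$ shift into a multiplicative constant yields the bound by $C H_t^{(0)}(x,y)$. For $\lambda>0$ I would split the integral into a near region, where both factors $|\kappa_{t-s}(z)-x|$ and $|\kappa_s(y)-z|$ are of order at most their scales, so the weight is simply bounded by $t^{-\lambda/\alpha}$ and transfers trivially, and a far region, where the elementary inequality $|u+v|^\lambda\vee 1\leq C\big((|u|^\lambda\vee 1)+(|v|^\lambda\vee 1)\big)$ (with $|u|,|v|$ being the two shifts) distributes the weight through the convolution. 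The super-convolution is obtained by the symmetric strategy: restrict integration to a ball of radius $c\,(s\wedge(t-s))^{1/\alpha}$ centered at $\upsilon^{t-s}_{t-s}(x)$, on which both stable kernels are bounded below by $c\,t^{-d/\alpha}G^{(\alpha)}((\kappa_t(y)-x)/t^{1/\alpha})$ in view of the upper half of \eqref{aflow} and the slow decay of $G^{(\alpha)}$; the volume of the ball supplies the normalization to match $H_t^{(\lambda)}$, and the weight factor is bounded below uniformly in the ball.

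The main obstacle I anticipate is the careful bookkeeping of the additive error $Ct^{1/\alpha+\delta}$ from \eqref{aflow}. Although it is $o(t^{1/\alpha})$, turning this into a pointwise comparison between $G^{(\alpha)}$-values requires the polynomial decay of $G^{(\alpha)}$ and repeated use of \eqref{G1}, and it must be verified that the two-level weight $(\,\cdot\vee 1)\wedge t^{-\lambda/\alpha}$ in \eqref{Ht21} survives the convolution, i.e.\ the ``$\vee 1$'' and the truncation at $t^{-\lambda/\alpha}$ are both preserved without spurious loss. A secondary technical issue is uniformity over $s\in(0,t)$: when $s$ is close to $0$ or $t$ one of the scales $s^{1/\alpha}$, $(t-s)^{1/\alpha}$ collapses, and I would handle this by the symmetry $s\leftrightarrow t-s$ to reduce to $s\leq t/2$, then use \eqref{appr_sol_1}--\eqref{appr_sol_2} to compare the asymmetric flows $\upsilon^r_\cdot,\theta^r_\cdot$ with the reference $\kappa_\cdot=\theta_\cdot^0$.
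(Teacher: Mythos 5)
Your plan is built on the right bridge --- the comparison inequality \eqref{aflow} --- but it re-derives from scratch a fact the paper simply quotes, and in doing so makes the argument considerably more laborious and more error-prone. The paper's proof first observes the clean factorization
\[
H_t^{(\lambda)}(x,y)=K_t^{(\lambda)}\bigl(x,\theta_t^0(y)\bigr),
\qquad
K_t^{(\lambda)}(x,y):=\Bigl(\bigl(\bigl|\tfrac{y-x}{t^{1/\alpha}}\bigr|^\lambda\vee 1\bigr)\wedge t^{-\lambda/\alpha}\Bigr)\tfrac{1}{t^{d/\alpha}}G^{(\alpha)}\bigl(\tfrac{y-x}{t^{1/\alpha}}\bigr),
\]
where the drift-free kernel $K^{(\lambda)}$ is \emph{already known} to have the sub-convolution property (Proposition~3.3 of \cite{KK15}, Case~A). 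Since $K_t^{(\lambda)}$ is a piecewise power-type function of $|x-y|/t^{1/\alpha}$, the inequality \eqref{aflow} immediately upgrades to a two-sided pointwise comparison \eqref{KH} between $K_t^{(\lambda)}(\upsilon_{t-s}^{t-s}(x),\theta_s^0(y))$ and $H_t^{(\lambda)}(x,y)$, uniformly in $s\in[0,t]$. One then sandwiches: replace $H_{t-s}^{(\lambda)}(x,z)$ by $K_{t-s}^{(\lambda)}(\upsilon_{t-s}^{t-s}(x),z)$ (right side of \eqref{KH} with $s'=0$), notice $H_s^{(\lambda)}(z,y)=K_s^{(\lambda)}(z,\theta_s^0(y))$ identically, apply the sub-convolution of $K^{(\lambda)}$, and return to $H$ via \eqref{KH} once more. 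The whole proof is three lines; no Jacobian computation, no near/far splitting, and the $\vee 1$ and $t^{-\lambda/\alpha}$ truncations never need to be tracked through the integral because they are absorbed wholesale into the $K$-statement.

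By contrast, your sketch essentially attempts to re-prove \cite{KK15}, Proposition~3.3 inline: you invoke Chapman--Kolmogorov for the unweighted $g^{(\alpha)}$, then propose a near/far decomposition to push the weight $\bigl(|\cdot|^\lambda\vee 1\bigr)\wedge t^{-\lambda/\alpha}$ through the convolution. That is exactly the content of the cited proposition and duplicating it costs a page of bookkeeping, which you correctly identify as ``the main obstacle.'' There is also a small slip: the substitution $z\mapsto\upsilon_{t-s}^{t-s}(x)+z$ is a rigid translation with Jacobian $1$; what is actually pinched between $e^{\pm Ct^\delta}$ is the derivative $\nabla\upsilon_{t-s}^{t-s}$ (equivalently $\nabla\kappa_{t-s}$), which you need in order to compare $\kappa_{t-s}\bigl(\upsilon_{t-s}^{t-s}(x)+z'\bigr)-x$ with the linear profile $z'$, not as a Jacobian of the change of variables. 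This is fixable but indicative of how much easier it is to move the flow distortion entirely into the second argument of $K$, as the paper does, rather than tracking it inside the stable kernel's argument. In short: your route is not wrong, but it conflates two separable pieces of the argument, and you should isolate the drift-free weighted sub-convolution as a known lemma and transfer it via \eqref{aflow}/\eqref{KH}.
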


\begin{proof} We prove the sub-convolution property, only: the proof of the super-convolution one is completely analogous and is omitted. Denote
$$
K_t^{(\lambda)} (x,y)=
\left(\left( \Big|\frac{y-x}{t^{1/\alpha}}\Big|^\lambda \vee 1 \Big) \wedge t^{-\lambda/\alpha}\right) \right)\frac{1}{t^{d/\alpha}} G^{(\alpha)} \left({y-x\over t^{1/\alpha}}\right),
$$
then $K_{t}^{(\lambda)}(x,y)$ has  the  sub-convolution property, see Proposition 3.3 in \cite{KK15}, Case \textbf{A}. We have
$$
H_t^{(\lambda)} (x,y)=K_t^{(\lambda)} (x,\theta_t^0(y)).
$$
Note that  $K_t^{(\lambda)} (x,y)$  is a piece-wise power type function of $|x-y|/t^{1/\alpha}$. Then one can easily  derive from \eqref{aflow} that, for a given $T>0$, there exist positive constants $C_1, C_2$ such that for any $T\in [0, T], s\in [0, t]$
\be\label{KH}
C_1H_t^{(\lambda)} (x,y)\leq K_t^{(\lambda)} (\upsilon_{t-s}^{t-s}(x),\theta_s^0(y))\leq  C_2H_t^{(\lambda)} (x,y).
\ee
Now, in order to obtain the required property of $H_t^{(\lambda)} (x,y)$, we apply first the right hand side inequality in \eqref{KH} with $t'=t-s$ and $s'=0$, then the sub-convolution property of $K_t^{(\lambda)} (x,y)$, and then the right hand side inequality in \eqref{KH}:
$$\ba
(H_{t-s}*H_s)(x,y)&\leq C\int_{\rd}K_{t-s}^{(\lambda)} (\upsilon_{t-s}^{t-s}(x), z)K_s^{(\lambda)} (z,\theta_s^0(y))\, dz
\\&\leq C K_t^{(\lambda)}(\upsilon_{t-s}^{t-s}(x),  \theta_s^0(y))\leq C H_t^{(\lambda)} (x,y).
\ea
$$
\end{proof}

Now  we have all the conditions of Lemma \ref{lH10} satisfied with $H^1=H^{(\chi)},H^2={(0)}$, $\delta_1=\chi/\alpha,\delta_2=\delta\wedge \chi.$ In addition, we have \eqref{F11}, hence by Lemma \ref{lH10} the convolution powers $\Phi^{\star k}, k\geq 1$ are well defined and the series in the right hand side of  \eqref{sol_1} converge absolutely. In addition,  for the residue term $r_t(x,y)=p_t(x,y)-p_t^0(x,y)$ we have the upper bound \eqref{boundr} with
$$
H^1_t(x,y)= H^{(\chi)}_t(x,y), \quad H^2_t(x,y)=H^{(0)}_t(x,y),\quad \delta_1={\chi\over \alpha}, \quad \delta_2=\chi\wedge \delta.
$$
We remark that this upper bound actually gives the following estimate for $r_t(x,y)$,  similar to \eqref{r_bound}:
\begin{equation}\label{r_bound1}
|r_t(x,y)|\leq C\Big(t^\zeta+|y-\upsilon_t(x)|^\chi\wedge 1\Big)p_t^0(x,y),\quad \zeta=\min\left\{\delta, \chi, {\chi\over \alpha}\right\}.
\end{equation}
Indeed, by \eqref{g_a} we have directly
\be\label{417}
H^{(0)}_t(x,y)\leq Cg_t^0(x,y).
\ee
On the other hand,
$$\ba
H_t^{(\chi)} (x,y)&=
\left(\left( \Big|\frac{\kappa_t(y)-x}{t^{1/\alpha}}\Big|^\chi \vee 1 \Big) \wedge t^{-\chi/\alpha}\right) \right)H^{(0)}_t(x,y)
\\&\leq
\left(\left( \Big|\frac{\kappa_t(y)-x}{t^{1/\alpha}}\Big|^\chi + 1 \Big) \wedge t^{-\chi/\alpha}\right) \right)H^{(0)}_t(x,y),
\ea
$$
hence
$$
\ba
t^{\chi/\alpha}H_t^{(\chi)} (x,y)&\leq \Big((|{\kappa_t(y)-x}|^\chi+t^{\chi/\alpha} ) \wedge 1\Big)H^{(0)}_t(x,y)
\\&\leq \Big(t^{\chi/\alpha} + |{\kappa_t(y)-x}|^\chi\wedge 1\Big)H^{(0)}_t(x,y).
\ea
$$
Combined with \eqref{417} and \eqref{boundr}, this provides  \eqref{r_bound1}.

The estimates, which we have just obtained, actually yield that the series in the right hand side of \eqref{sol_1} converge uniformly for $t\in [0, T], x,y\in \rd$ for every $T$. Because $p_t^0(x,y)$ and $\Phi_t(x,y)$ are continuous in $(t,x,y)$ which possess explicit upper bounds, it is a routine calculation to show that each term $p^0\star \Phi^{\star k}, k\geq 1$ is continuous in $(t,x,y)$; e.g. \cite{KK15}, Section 3.3. This proves the assertion of Theorem \ref{t1} on  the continuity of $p_t(x,y)$. The same argument actually shows that the identity \eqref{P_int} defines a family $P_t, t>0$ of bounded linear operators on $C_\infty$, which is strongly continuous; that is,
$$
\|P_tf-P_s\|_\infty\to 0, \quad t\to s, \quad s>0, \quad f\in C_\infty.
$$

\subsection{``Fine tuning'' of the decomposition of $p_t(x,y)$ }\label{s42} We  have obtained $p_t(x,y)$ in the form \eqref{sol} with $p_t^0(x,y)$ defined by \eqref{p01}. Now we re-arrange  this decomposition to the form \eqref{decomp} and prove the estimate \eqref{r_bound} for the respective residue term.

First, we note that \eqref{appr_sol} with $s=t$ and \eqref{aflow} with $s=0$ yield
\be\label{aflow1}
e^{-Ct^\delta}|x-\kappa_t(y)|-Ct^{1/\alpha+\delta}\leq |\upsilon_{t}(x)-y|\leq e^{Ct^\delta}|x-\kappa_t(y)|+Ct^{1/\alpha+\delta}.
\ee

Next, we obtain the following property of the function $g^{(\alpha)}$.

\begin{lem}\label{l43} For any $V>0$ there exists a constant $C_V$ such that for any $v\in [0, V]$ and any $x,y\in \rd$ such that
\be\label{420}
e^{-v}|x|-v\leq |y|\leq e^{v}|x|+v,
\ee
the following inequality holds:
\be\label{421}
e^{-C_Vv}g^{(\alpha)}(x)\leq g^{(\alpha)}(y)\leq e^{C_Vv}g^{(\alpha)}(x).
\ee
\end{lem}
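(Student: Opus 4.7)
\textbf{Proof plan for Lemma \ref{l43}.}

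The plan is to exploit the rotational invariance of $g^{(\alpha)}$ to reduce the problem to a one-dimensional Lipschitz estimate on the logarithm of the radial profile. By rotational invariance, there exists a smooth positive function $G:[0,\infty)\to(0,\infty)$ with $g^{(\alpha)}(x)=G(|x|)$. Combining \eqref{g_a} and \eqref{g_a_der} gives
\[
G(r)\asymp (r\vee 1)^{-d-\alpha},\qquad |G'(r)|\leq C(r\vee 1)^{-d-\alpha-1},
\]
so that $|G'(r)/G(r)|\leq C_*/(r\vee 1)$. Setting $r=|x|$, $s=|y|$, this yields
\[
\bigl|\log g^{(\alpha)}(y)-\log g^{(\alpha)}(x)\bigr|=\left|\int_r^s\frac{G'(u)}{G(u)}\,du\right|\leq C_*\,I(r,s),\qquad I(r,s):=\left|\int_{r}^{s}\frac{du}{u\vee 1}\right|.
\]
Thus it suffices to show that the constraint \eqref{420} forces $I(r,s)\leq C_V\,v$.

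The second step is a short case analysis using the two elementary consequences of \eqref{420}:
\begin{equation*}
|s-r|\leq (e^v-1)\,r+v\leq v\,e^V\,r+v,\qquad s\wedge r\leq 1\ \Rightarrow\ s\vee r\leq e^V(1+V).
\end{equation*}
I would split into three regions. If $r,s\leq 1$ then $I(r,s)=|s-r|\leq v(e^V+1)$. If $r,s\geq 1$ then $I(r,s)=|\log(s/r)|$, and dividing \eqref{420} by $r\vee s$ yields $s/r\leq e^v+v$ and (by the symmetric form obtained from \eqref{420}) $r/s\leq e^v+v\,e^v$, so $|\log(s/r)|\leq v+\log(1+v\,e^V)\leq (1+e^V)v$. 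In the mixed case, say $r\leq 1\leq s$, the constraint forces $r$ close to $1$ (precisely $r\geq e^{-v}(1-v)$, hence $1-r\leq 2v$) and $s$ close to $1$ (namely $s-1\leq (e^v-1)r+v\leq v(e^V+1)$); since $I(r,s)=(1-r)+\log s\leq (1-r)+(s-1)$, the same type of bound holds. The remaining case $s\leq 1\leq r$ is symmetric and uses that $s\leq 1$ together with \eqref{420} forces $r\leq e^V(1+V)$, so $I(r,s)\leq C_V v$ again.

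Combining the three cases gives $I(r,s)\leq C_V\,v$ uniformly over $v\in[0,V]$ and $(x,y)$ satisfying \eqref{420}, whence $|\log g^{(\alpha)}(y)-\log g^{(\alpha)}(x)|\leq C_*C_V\,v$, which is exactly \eqref{421} after renaming the constant. The only mildly delicate point is the mixed case $r\leq 1\leq s$: one must observe that \eqref{420} alone forces both $|s-1|$ and $|1-r|$ to be $O(v)$, so that even though the integrand $1/(u\vee 1)$ switches regime at $u=1$, the domain of integration has length $O(v)$; once this is noticed, the estimate is immediate.
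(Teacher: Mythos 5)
Your proof is correct and takes essentially the same route as the paper: both use rotational invariance to reduce to the radial profile, integrate the logarithmic derivative of $g^{(\alpha)}$, and invoke the bound $|\nabla\log g^{(\alpha)}(z)|\lesssim 1/(1+|z|)$ (which is your $|G'(r)/G(r)|\lesssim 1/(r\vee 1)$) obtained from \eqref{g_a}, \eqref{g_a_der}, and \eqref{G3}. The only difference is the last step: instead of your three-way case analysis around $r=1$, the paper bounds the integral in one stroke by noting that for colinear $x,y$ one has $|x-y|\leq C\bigl((e^v-1)\min_{s\in[0,1]}|sx+(1-s)y|+v\bigr)$, which after dividing by $1+\min_s|sx+(1-s)y|$ gives $C_Vv$ directly and avoids splitting at the crossover point of $u\vee 1$.
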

\begin{proof} We note that $g^{(\alpha)}$ is rotationally invariant, hence without loss of generality we can restrict ourselves to the case where $x$ and $y$ have same direction. We have
$$
{g^{(\alpha)}(x)\over g^{(\alpha)}(y)}=\exp\left\{\int_0^1\Big(\big(\nabla \log g^{(\alpha)}\big)(sx+(1-s)y), x-y\Big)\, ds\right\}.
$$
By \eqref{g_a}, \eqref{g_a_der}, and \eqref{G3},
$$
\big|\nabla \log g^{(\alpha)}(x)\big|=\left|\nabla g^{(\alpha)}(x)\over g^{(\alpha)}(x)\right|\leq C{1\over 1+|x|},
$$
hence
$$
\left|\int_0^1\Big(\big(\nabla \log g^{(\alpha)}\big)(sx+(1-s)y), x-y\Big)\, ds\right|\leq C{|x-y|\over 1+\min_{s\in [0,1]}|sx+(1-s)y|}.
$$
Because $x,y$ have the same direction, it follows from \eqref{420} that
$$
|x-y|\leq C\left((e^v-1)\min_{s\in [0,1]}|sx+(1-s)y|+v\right).
$$
Together with the previous inequality, this yields
$$
\left|\int_0^1\Big(\big(\nabla \log g^{(\alpha)}\big)(sx+(1-s)y), x-y\Big)\, ds\right|\leq C_V v,
$$
which completes the proof of the required statement.
\end{proof}

As a corollary of \eqref{aflow1} and \eqref{421}, we get
\be\label{422}
e^{-Ct^\delta}\widetilde p_t(x,y)\leq p_t^0(x,y)\leq e^{Ct^\delta}\widetilde p_t(x,y), \quad t\in (0, T],
\ee
where the constant $C$ depends on $T$, only. This yields that  the residue term $r_t(x,y)$ satisfies analogue of \eqref{r_bound1} with
$p_t^0(x,y)$ in the right hand side replaced by $\widetilde p_t(x,y)$. This also yields that
$$
|p_t^0(x,y)-\widetilde p_t(x,y)|\leq Ct^\delta \widetilde p_t(x,y), \quad t\in (0, T].
$$
Together with the above analogue of \eqref{r_bound1}, this completes the proof of \eqref{r_bound}.

The same strategy can be used in order to prove the two-sided estimate \eqref{two-sided} for $p_t(x,y)$. First, we prove similar  two-sided estimate with $p_t^0(x,y)$ instead of $\widetilde p_t(x,y)$: the upper bound is straightforward, the proof of the lower bound is completelky analogous to the proof of Theorem  2.5 \cite{KK15} and thus is omitted. Then \eqref{two-sided} follows by  \eqref{422}.

The same argument can be also applied to provide the representations of $p_t(x,y)$ outlined in Remark \ref{r21}. Namely, by \eqref{appr_sol_1} and \eqref{aflow},
$$
e^{-Ct^\delta}|x-\kappa_t(y)|-Ct^{1/\alpha+\delta}\leq |\upsilon_{t}^0(x)-y|\leq e^{Ct^\delta}|x-\kappa_t(y)|+Ct^{1/\alpha+\delta}.
$$
This yields an analogue of \eqref{r_bound}, when $\upsilon_t(x)$ in the definition of $\widetilde p_t(x,y)$ is replaced by $\upsilon_t^0(x)$.
On the other hand, \eqref{aflow} and \eqref{329} yield
$$
e^{-Ct^\delta}|x-\kappa_t(y)|-Ct^{1/\alpha+\delta}-Ct^{1/\alpha+\rho_k}\leq |\upsilon_{k,t}(x)-y|\leq e^{Ct^\delta}|x-\kappa_t(y)|+Ct^{1/\alpha+\delta}+Ct^{1/\alpha+\rho_k},
$$
which provides the required error bound in the case where $\upsilon_t(x)$ in the definition of $\widetilde p_t(x,y)$ is replaced by $\upsilon_{k,t}(x)$.

Finally, we mention that $\widetilde p_t(x,y)$ is the distribution density of $\widetilde X_{t,x}$ defined by \eqref{trotter},
and $\widetilde X_{t,x}$ weakly converge to $x$ when $t\to 0+$. Combined with \eqref{r_bound},  this means  that if we extend the family of operators $P_t, t>0$ (see  \eqref{P_int}) by the natural convention that $P_0$ is the identity operator in $C_\infty$, we get a strongly continuous family $P_t, t\geq 0$.

\subsection{Approximate fundamental solution, weak uniqueness, and the semigroup properties}\label{s43}
Here we outline the construction of the approximate fundamental solution, introduced in  \cite{KK15}, which shows that $p_t(x,y)$ constructed above indeed solves the Cauchy problem \eqref{L_fund}, \eqref{L_delta} in a certain approximate sense.  The main difficulty here is caused by the following. Because $D(L)=C^2_\infty$, to apply $L_x$ to $p_t(x,y)$ one should preliminarily specify $\nabla_x p_t(x,y)$, $\nabla_{xx}^2 p_t(x,y)$. The function $p_t(x,y)$ is presented in \eqref{sol_1}, and the naive argument would be to apply $\nabla_x$, $\nabla_{xx}^2$ to each term in this representation. The functions $\nabla_x p_t^0(x,y)$, $\nabla_{xx}^2 p_t^0(x,y)$ are explicit and well defined, however in general they exhibit strongly singular behavior (e.g. (4.1) in \cite{KK15}) which makes it impossible to apply, say, $\nabla_{xx}^2$  to any integral term in \eqref{sol_1}.

Motivated by this observation, we introduce the  family of functions $\{p_{\eps_t}(x,y), \eps>0\}$,
\begin{equation}\label{pe}
p_{t,\epsilon}(x,y)=p_{t+\epsilon}^0(x,y)  + \int_0^t \int_{\rd}  p_{t-s+\eps}^0(x,z)  \Psi_s(z,y) dzds,
\end{equation}
and define
\begin{equation} \label{Pte}
P_{t,\epsilon} f(x):=\int_\rd p_{t,\epsilon}(x,y)f(y)dy, \quad t\geq 0, \, x\in \rd, \quad f\in C_\infty(\rd).
\end{equation}
The  additional time shift by positive $\eps$ removes the singularity at the point $s=t$ and resolves the difficulty outlined above. Namely, we have the following properties, which easily follows from the explicit representation for $p_t^0(x,y)$, and the parametrix estimates for the kernels $\Phi_t(x,y)^{\star k}$; see \cite{KK15}, Lemma 4.1 for the detailed proof which actually does not relies on the specific properties of the model.

\begin{lem}\label{aux-ep}
\begin{enumerate}
\item  For every   $f\in C_\infty(\rd)$, $\eps>0$ the function $P_{t,\eps}f(x)$ belongs to $C^1$ as a function of $t$, to $C^2_\infty$ as a function of $x$, and the functions $\prt_tP_{t,\eps}f(x), L_x P_{t,\eps}f(x)$ are continuous w.r.t. $(t,x)$.
 \item    For every   $f\in C_\infty(\rd)$, $T>0$,
  \be\label{conv_pte}
 \|P_{t,\epsilon}f- P_t f\|_\infty\to 0, \quad \epsilon\to 0,
  \ee
   uniformly in $t\in [0,T ]$, and for every $\eps>0$
  \be\label{conv_x}
  P_{t,\epsilon}f(x)\to 0, \quad |x|\to \infty
  \ee
 uniformly in $t\in [0,T]$.

 \end{enumerate}
\end{lem}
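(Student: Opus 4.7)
The plan is to exploit the time shift by $\eps$ in \eqref{pe}: in both $p_{t+\eps}^0(x,y)$ and $p_{t-s+\eps}^0(x,z)$ the first time argument is bounded below by $\eps>0$, which removes the singularity of $\nabla_x^k p_r^0$ and $\partial_r p_r^0$ at $r=0$ and authorizes differentiation under the integral sign.

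For part (1), I would first record from the explicit formula \eqref{p01}, the smoothness of $\kappa_r$ in $r$ (its defining ODE has the mollified coefficient $b(r,\cdot)$, Lipschitz in $x$ for $r>0$ by \eqref{appr_der}), and Proposition~\ref{A1}, the bounds
$$
|\nabla_x^k p_r^0(x,y)|\leq C\, r^{-(k+d)/\alpha}\, G^{(\alpha+k)}\!\left(\frac{\kappa_r(y)-x}{r^{1/\alpha}}\right),\quad k=0,1,2,
$$
together with an analogous bound of order $r^{-1-d/\alpha}$ for $|\partial_r p_r^0(x,y)|$. For $r=t-s+\eps\geq \eps$ these prefactors are uniformly controlled on any compact $(t,x)$-set; combined with the bound \eqref{F20} for $\Psi_s$, Lebesgue's dominated convergence justifies both the existence and the $(t,x)$-continuity of $\partial_t P_{t,\eps}f$ and $L_x P_{t,\eps}f$. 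When differentiating the time integral by Leibniz' rule, the boundary contribution $\int p_\eps^0(x,z)\Psi_t(z,y)\,dz$ is finite precisely because of the $\eps$-shift. That these functions decay at infinity in $x$ reduces to the argument used for $P_{t,\eps}f$ itself below.

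For part (2), the uniform decay $P_{t,\eps}f(x)\to 0$ as $|x|\to\infty$ is handled by splitting the integral according to $\{|y|\leq R\}$ and $\{|y|>R\}$: the first contribution vanishes since $|\kappa_r(y)-x|\to\infty$ uniformly in $y\in B_R$ and $r\in[\eps,T+\eps]$, and $G^{(\alpha)}$-decay applies; the second contribution is controlled by $\|f\mathbf{1}_{|y|>R}\|_\infty$ times $\|p_{t,\eps}(x,\cdot)\|_{L^1}\leq C$ using the sub-convolution bounds for $p^0$ and the bound \eqref{F20} for $\Psi$. For the sup-norm convergence $\|P_{t,\eps}f - P_t f\|_\infty\to 0$ uniformly in $t\in[0,T]$, one writes
$$
p_{t,\eps}(x,y)-p_t(x,y)=\big[p_{t+\eps}^0(x,y)-p_t^0(x,y)\big]+\int_0^t\!\!\int_{\rd}\!\big[p_{t-s+\eps}^0(x,z)-p_{t-s}^0(x,z)\big]\Psi_s(z,y)\,dz\,ds,
$$
integrates against $f$, and invokes dominated convergence with dominating kernels supplied by Lemma~\ref{lH10}.

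The main obstacle is keeping this last convergence uniform in $t\in[0,T]$ near $t=0$, where the integrals over $[0,t]$ collapse but one still needs the approximate-identity statement $\int p_\eps^0(x,y)f(y)\,dy\to f(x)$ uniformly in $x$. Since $f\in C_\infty$ is uniformly continuous and $p_\eps^0(x,\cdot)$ is, up to the translation by $\kappa_\eps(y)-y=O(\eps)$, a scaled copy of $g^{(\alpha)}$, this is routine and in fact yields uniform continuity of $r\mapsto \int p_r^0(x,y)f(y)\,dy$ on $[0,T+1]$, uniformly in $x$. Combined with the dominating function from Lemma~\ref{lH10} applied to $H^{(0)}\star |\Psi|$, this delivers the desired uniform convergence on all of $[0,T]$.
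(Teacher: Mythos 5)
Your proposal is correct and follows essentially the same approach as the paper, which simply states that the lemma follows from the explicit form of $p_t^0$ and the parametrix estimates on $\Phi^{\star k}$, delegating the details to Lemma~4.1 of \cite{KK15}. The ingredients you identify (the $\eps$-shift to kill the singularity at $r=0$ and allow differentiation under the integral via dominated convergence with the bounds of Proposition~\ref{A1}; the Leibniz boundary term $\int p_\eps^0(x,z)\Psi_t(z,y)\,dz$; the approximate-identity behaviour of $p_r^0$ near $r=0$ to handle uniformity in $t$ near $0$) are exactly what makes that reference work. One small point worth expanding if you write this out in full: for the decay as $|x|\to\infty$, the $\{|y|\le R\}$ contribution coming from the $\Psi$-convolution still involves an integral over all $z\in\rd$, so after taking $|x|$ large you should additionally use that $H_s^{(\chi)}(z,y)$ concentrates near $\kappa_s(y)\approx z$, hence for $|z|$ comparable to $|x|$ and $|y|\le R$ this kernel is small; the argument goes through but is slightly more than a two-line split.
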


The following lemma shows that the family $p_{\eps, t}(x,y), \eps>0$ satisfies \eqref{L_fund}, \eqref{L_delta} in a (weak) approximate sense.
This is the reason for us to call this family an \emph{approximate fundamental solution} to \eqref{L_fund}, \eqref{L_delta}.

Denote
\begin{equation}\label{qte20}
\Delta_{t,\eps} f(x)= \big(\partial_t-L_x\big) P_{t,\epsilon} f (x), \quad f\in C_\infty(\rd).
\end{equation}

\begin{lem}\label{l5}  For any  $f\in C_\infty(\rd)$  we have
\begin{enumerate}

  \item
  \be\label{conv_loc}
\Delta_{t,\eps} f(x)\to 0, \quad \epsilon\to 0,
\ee
uniformly in  $(t,x)\in [\tau,T]\times \rd$ for any $\tau>0$, $T>\tau$;

 \item
 $$
 \lim_{t,\eps\to 0+} \|P_{t,\eps} f-f\|_\infty =0.
  $$
\end{enumerate}
\end{lem}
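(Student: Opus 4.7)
For part (1), the plan is to differentiate $P_{t,\eps}f(x)$ explicitly; Lemma~\ref{aux-ep} legitimates all the manipulations for $\eps>0$. Writing
\[
P_{t,\eps}f(x)=P^0_{t+\eps}f(x)+\int_0^t P^0_{t-s+\eps}(\Psi_sf)(x)\,ds,
\]
where $P^0_\tau g(x):=\int p^0_\tau(x,z)g(z)\,dz$ and $\Psi_sf(z):=\int\Psi_s(z,y)f(y)\,dy$, the Leibniz rule and the defining identity $(\partial_t-L_x)p^0_\tau(x,y)=-\Phi_\tau(x,y)$ (valid for $\tau>0$ by \eqref{Phi}) give
\[
\Delta_{t,\eps}f(x)=-\Phi_{t+\eps}f(x)-\int_0^t\Phi_{t-s+\eps}(\Psi_sf)(x)\,ds+P^0_\eps(\Psi_tf)(x).
\]
Next I would invoke the Volterra identity $\Psi_t=\Phi_t+(\Phi\star\Psi)_t$ at time $t+\eps$ and split its integral at $s=t$, obtaining
\[
\Delta_{t,\eps}f(x)=\bigl[P^0_\eps(\Psi_tf)(x)-\Psi_tf(x)\bigr]+\bigl[\Psi_tf(x)-\Psi_{t+\eps}f(x)\bigr]+\int_t^{t+\eps}\Phi_{t+\eps-s}(\Psi_sf)(x)\,ds.
\]

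I would then show each of the three brackets vanishes uniformly on $[\tau,T]\times\rd$ as $\eps\to 0+$. For the last term, the substitution $u=t+\eps-s$ together with the parametrix bound of Lemma~\ref{lPhi1} reduces it to $\sup_{s\in[\tau,T+1]}\|\Psi_sf\|_\infty\cdot C\eps^\zeta$; here one uses that $\int H^{(\lambda)}_u(x,y)\,dy\le C$ for $\lambda\in[0,\alpha)$, which follows from the change of variables $w=\kappa_u(y)$ and the scaling of $G^{(\alpha)}$. The middle bracket vanishes uniformly by joint continuity and uniform decay at infinity of $(t,x)\mapsto\Psi_tf(x)$, both obtained by term-by-term analysis of the convolution series $\Psi=\sum_{k\ge1}\Phi^{\star k}$ (which converges absolutely and uniformly for $t\in[\tau,T]$ by Lemma~\ref{lH10}). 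The first bracket reduces to an approximate-identity property: $\|P^0_\eps g-g\|_\infty\to 0$ for $g\in C_\infty(\rd)$, applied with $g=\Psi_tf$.

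For part (2), I would decompose
\[
P_{t,\eps}f(x)-f(x)=\bigl[P^0_{t+\eps}f(x)-f(x)\bigr]+\int_0^t P^0_{t-s+\eps}(\Psi_sf)(x)\,ds.
\]
The first bracket goes to zero in sup-norm as $t+\eps\to 0+$ by the same approximate-identity property. The remainder is dominated by $C\|f\|_\infty\int_0^t\bigl(s^{-1+\chi/\alpha}+s^{-1+\chi\wedge\delta}\bigr)\,ds=O(t^\zeta)\|f\|_\infty$, using the parametrix bound on $\Psi_s$ from Lemma~\ref{lH10}, the $L^1_y$-bound on $H^{(\lambda)}_s$, and the uniform estimate $\|P^0_{t-s+\eps}\|_{\infty\to\infty}\le C$ (which comes from comparison with $\widetilde p_{t-s+\eps}$, a probability density).

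The main technical obstacle, needed in both parts, is the approximate-identity assertion $\|P^0_\eps g-g\|_\infty\to 0$ for $g\in C_\infty(\rd)$. Two issues must be addressed: $\int p^0_\eps(x,y)\,dy\to 1$ uniformly in $x$—nontrivial because both $a(y)$ and $\kappa_\eps(y)$ depend on $y$—and concentration of the remaining mass on a vanishing neighbourhood of $x$. I would resolve both via the fine-tuning estimate \eqref{422}, which gives $p^0_\eps(x,y)=e^{O(\eps^\delta)}\widetilde p_\eps(x,y)$, together with the fact that $\widetilde p_\eps(x,\cdot)$ is the law of $\widetilde X_{\eps,x}=\upsilon_\eps(x)+\sigma(x)Z^{(\alpha)}_\eps$ (see \eqref{trotter}), which concentrates on $x$ at scale $\max(\eps,\eps^{1/\alpha})\to 0$; uniformity in $x$ follows from the uniform continuity of $g$ (a consequence of $g\in C_\infty$).
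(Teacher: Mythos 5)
Your proposal is correct and follows essentially the same route as the paper, which for part (1) defers to the proof of Lemma 5.2 in \cite{KK15} based on exactly the smoothness/estimates of $p^0,\Phi,\Psi$ together with the Volterra identity $\Phi_t=\Psi_t-(\Phi\star\Psi)_t$ that your cancellation at $s\in[0,t]$ exploits, and for part (2) invokes the same approximate-identity behaviour of $\widetilde p_t$ together with the comparison $p^0_t\asymp\widetilde p_t$ from \eqref{422}. Your write-up usefully spells out the decomposition of $\Delta_{t,\eps}f$ into three vanishing brackets, which the paper leaves to the reference.
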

The second assertion easily follows from the formula \eqref{pe}, estimates on $\Psi$ from Section \ref{s41}, and the facts that $\widetilde p_t(x,y)\to \delta_x(y), t\to0+$ and  $p^0_t(x,y)-\widetilde p_t(x,y)$ posses an estimate similar to \eqref{r_bound}. For the first assertion, we refer to the proof of Lemma 5.2 in \cite{KK15}, which is not specific with respect to the model and relies only on the smoothness properties of $p_t^0(x,y),\Phi_t(x,y)$, $\Psi_t(x,y)$, the estimates for these functions, and the fact that the function $\Psi$ satisfies the integral equation $$
\Phi_t(x,y)=\Psi_t(x,y)-\int_0^t\int_{\Re^d}\Phi_{t-s}(x,z)\Psi_s(z,y)\,dzds.
$$
All these ingredients are already available in the current setting due to the parametrix construction of Section \ref{s41}.

\begin{cor}\label{c41} Let $f\in C_\infty, T>0$ be fixed, define for $t\in [0, T], x\in \rd, \eps>0$
$$
g(t,x)=P_tf(x), \quad g_\eps(t,x)=P_{t, \eps}f(x),\quad  g^T(t,x)=P_{T-t}f(x), \quad g_\eps^T(t,x)=P_{T-t, \eps}f(x).
$$
Then for any $T>0$ \begin{itemize}
       \item for every $\eps>0$, the functions $\prt_t g_\eps(t,x), L_xg_\eps(t,x)$, $\prt_t g_\eps^T(t,x), L_xg_\eps^T(t,x)$ are well defined, continuous and bounded on $[0, T]\times \rd$;
       \item for any $\tau\in (0, T)$,
       $$
       (\prt_t-L_x)g_\eps(t,x)\to 0, \quad (\prt_t+L_x)g_\eps^T(t,x)\to 0, \quad \eps\to 0
       $$
       uniformly w.r.t. $t\in [\tau, T], x\in \rd$ and $t\in [0, T-\tau]$, respectively;
       \item
       $$
       g_\eps(t,x)\to g(t,x), \quad g_\eps^T(t,x)\to g^T(t,x), \quad \eps\to 0
       $$
       uniformly w.r.t. $t\in [0, T], x\in \rd$.
     \end{itemize}
\end{cor}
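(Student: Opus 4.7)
The plan is to recognize that Corollary~\ref{c41} is essentially a repackaging of Lemma~\ref{aux-ep} and Lemma~\ref{l5}, together with a simple time-reversal applied to the backward objects $g^T_\eps$ and $g^T$. For the forward function $g_\eps(t,x)=P_{t,\eps}f(x)$, the smoothness/continuity of $\prt_t g_\eps$ and $L_x g_\eps$ on $[0,T]\times\rd$ is part~1 of Lemma~\ref{aux-ep} verbatim; the uniform convergence $g_\eps\to g$ on $[0,T]\times\rd$ is \eqref{conv_pte}; and the identity $(\prt_t-L_x)g_\eps(t,x)=\Delta_{t,\eps}f(x)$, combined with \eqref{conv_loc}, gives the uniform convergence to $0$ on $[\tau,T]\times\rd$.

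For the boundedness of $\prt_t g_\eps$ and $L_x g_\eps$ uniformly in $(t,x)\in[0,T]\times\rd$ (not just pointwise continuity), I would argue as follows. Lemma~\ref{aux-ep} delivers $P_{t,\eps}f\in C^2_\infty$ in $x$, so $L_x g_\eps(t,x)$ vanishes as $|x|\to\infty$; joint continuity on $[0,T]\times\rd$ then forces boundedness. For $\prt_t g_\eps$, the identity $\prt_t g_\eps=L_x g_\eps+\Delta_{t,\eps}f$ reduces matters to boundedness of $\Delta_{t,\eps}f$; uniform smallness on $[\tau,T]\times\rd$ comes from \eqref{conv_loc}, while for the short-time regime $t\in[0,\tau]$ one reads the representation \eqref{pe} directly: the positive shift by $\eps$ makes $\prt_tp_{t+\eps}^0(x,y)$ and the time derivative of the integral term pointwise bounded in $x$ by integrable-in-$y$ majorants already computed in Section~\ref{s41}, so the corresponding integrals against $f\in C_\infty$ are uniformly bounded in $(t,x)\in[0,\tau]\times\rd$.

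For the backward objects, I would use the substitution $s=T-t$. Since $g^T_\eps(t,x)=P_{s,\eps}f(x)|_{s=T-t}$, the chain rule gives
$$
\prt_t g^T_\eps(t,x)=-\bigl(\prt_s P_{s,\eps}f(x)\bigr)\big|_{s=T-t},\qquad L_x g^T_\eps(t,x)=\bigl(L_x P_{s,\eps}f(x)\bigr)\big|_{s=T-t},
$$
so continuity and boundedness on $[0,T]\times\rd$ transfer immediately from the forward case. Moreover
$$
(\prt_t+L_x)g^T_\eps(t,x)=-(\prt_s-L_x)P_{s,\eps}f(x)\big|_{s=T-t}=-\Delta_{T-t,\eps}f(x),
$$
which tends to $0$ uniformly on $[0,T-\tau]\times\rd$ because $T-t\in[\tau,T]$ and \eqref{conv_loc} applies. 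The uniform convergence $g^T_\eps\to g^T$ on $[0,T]\times\rd$ follows from \eqref{conv_pte} after the same substitution.

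I do not expect any serious obstacle here: all the analytic machinery has already been assembled in Lemma~\ref{aux-ep} and Lemma~\ref{l5}. The only mildly delicate point is ensuring boundedness of $\prt_t g_\eps$ and $L_x g_\eps$ uniformly in $x$ up to the time endpoints, which is why I would keep the argument based on \eqref{pe} separate from the $[\tau,T]$ regime; once that is in hand, the statement follows by direct comparison and by the one-line time-reversal identity above.
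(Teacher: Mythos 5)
Your strategy is exactly the one the paper implicitly uses: the corollary is Lemma~\ref{aux-ep} and Lemma~\ref{l5} for the forward objects, and the time-reversal identities $\prt_t g^T_\eps(t,x)=-\prt_s P_{s,\eps}f(x)|_{s=T-t}$, $(\prt_t+L_x)g^T_\eps=-\Delta_{T-t,\eps}f$, transport everything to the backward ones. That part, and the use of \eqref{conv_pte} and \eqref{conv_loc}, are fine.

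The one place where the argument as written has a real gap is the boundedness claim in the first bullet. Joint continuity of $L_xg_\eps$ on $[0,T]\times\rd$ together with $L_xg_\eps(t,\cdot)\in C_\infty$ for each fixed $t$ does \emph{not} by itself give boundedness on $[0,T]\times\rd$; one needs the decay in $x$ to be uniform over $t\in[0,T]$, and Lemma~\ref{aux-ep} only states uniform decay (\eqref{conv_x}) for $P_{t,\eps}f$, not for $L_xP_{t,\eps}f$. Likewise, invoking \eqref{conv_loc} to bound $\Delta_{t,\eps}f$ on $[\tau,T]\times\rd$ only controls small $\eps$; the corollary asserts boundedness for \emph{every} $\eps>0$, not just in the limit. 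The correct fix is the one you already outline for the short-time regime: read the representation \eqref{pe} directly for \emph{all} $t\in[0,T]$. Since $t-s+\eps\geq\eps>0$ throughout, the kernel $p^0_{t-s+\eps}(x,z)$ and its $L_x$- and $\prt_t$-images have explicit, $(t,x)$-uniform, integrable-in-$z$ bounds coming from Proposition~\ref{A1} and the $\Psi$-estimates of Section~\ref{s41}; convolving with $\Psi$ and integrating against $f\in C_\infty$ then yields boundedness of both $L_xg_\eps$ and $\prt_tg_\eps$ on $[0,T]\times\rd$ without any appeal to ``continuity plus pointwise decay'' or to $\eps\to0$ limits. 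With this single repair the proof is complete and coincides with the intended one.
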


This corollary well illustrates the heuristics which motivates the notion of the approximative fundamental solution. If we were able to prove that $p_t(x,y)$ is smooth enough and is a classical fundamental solution, we would be typically able to prove that the functions $g(t,x), g^T(t,x)$ defined above are \emph{harmonic} functions for the operators $\prt_t-L$ and $\prt_t+L$, respectively. Proving the required smoothness, if even being possible, is a tough problem which would require a more delicate analysis of the structure of the initial equation. On the other hand, using the basic parametrix structure only, we are able to approximate these functions by some families in such a way that the respective operators $\prt_t-L$ and  $\prt_t+L$ on these families vanish, in a sense. This makes it possible to treat the functions $g(t,x), g^T(t,x)$ as \emph{approximate harmonic}, which appears to be quite fruitful. A good illustration for this point is given by the simple proof of the weak uniqueness of the solution to \eqref{SDE} which we explain further on.

Let $X$ be any weak solution to \eqref{SDE}, without loss of generality we can assume it has  trajectories in the space  of c\'adl\'ag functions $\DD(\Re^+, \rd)$. It follows by the It\^o formula (e.g. \cite{IW81}, Chapter II) that for any $f\in C_\infty^2$ the process
\be\label{M_f}
f(X_t)-\int_0^t Lf(X_s)\, ds
\ee
is a martingale; that is, the distribution of $X$ in $\DD(\Re^+, \rd)$ is a  solution to the martingale problem $(L, C^2_\infty)$. Hence to prove the weak uniqueness it is sufficient to show that this martingale problem is \emph{well posed} in $\DD(\Re^+, \rd)$, which means that for any probability measure $\mu$ on $\rd$ there exists at most one measure $\P$ on $\DD(\Re^+, \rd)$ such that $\P(X_0\in du)=\mu(du)$ and for any $f\in C_\infty^2$ the process \eqref{M_f} is a martingale w.r.t. $\P$. By Corollary 4.4.3 in \cite{EK86}, to do this it is sufficient to prove that for any two such measures  corresponding one-dimensional projections coincide.
Below we fix \emph{arbitrary} solution $\P$ of martingale problem $(L, C_2^\infty)$ in $\DD(\Re^+, \rd)$ with $\P(X_0\in du)=\mu(du)$ and specify its one-dimensional projections. In what follows, we denote by $\E^\P$ the expectation w.r.t. $\P$.

First, we mention that $\P$ corresponds to a stochastically continuous process. Indeed, for any pair of open sets $U, V$ such that their closures are compact and disjoint, there exists a function $f\in C_\infty^2$ such that $f(x)=0, x\in U$ and $f(x)=1, x\in V$. Then for every $s\geq 0$ we have
$$
\P(X_s\in U, X_t\in V)\leq \E^\P(f(X_t)-f(X_s))=\E^\P\int_s^t Lf(X_r)\, dr\to 0, \quad t\to s+.
$$
On the other hand, since $\P$ is a measure on a Polish space  $\DD(\Re^+, \rd)$, it is tight, and in particular for any $\eps>0$, $T>0$ there exists a compact set $K_{\eps,T}\subset \rd$ such that
$$
\P(X_s\in K_{\eps, T})\geq 1-\eps, \quad s\in [0, T];
$$
see \cite{EK86}, Section 3.5. Combining these two observations we easily get the required stochastic continuity. Now we can apply assertion (a) of Lemma 4.3.4 in \cite{EK86} and  get that for any function $g(t, x), t\in [0, T], x\in \rd$ which is differentiable w.r.t. $t$, belongs to $D(L)=C^2_\infty$ w.r.t. $x$, and has continuous
$\prt_t g(t, x), L_xg(t,x)$, the following process is a martingale w.r.t. measure $\P$:
$$
g(t, X_t)-\int_0^t\Big( \prt_sg(s, X_s)+L_xg(s, X_s)\Big)\, ds.
$$

Now all the preliminaries are complete, and we present the cornerstone of the proof. Fix any $f\in C_\infty$ and $T>0$,
and consider the function $g^T(t,x)=P_{T-t}f(x)$. If we would knew that this function is harmonic and  $\prt_t g(t, x), L_xg(t,x)$ are continuous, this would imply directly
$$
\E^\P f(X_T)\Big(=\E^\P g^T(T, X_T)=\E^\P g^T(0, X_0)\Big)=\E^\P P_{T}f(X_0).
$$
We avoid proving that $g^T(t,x)$ is smooth by using its interpretation as an approximate harmonic function, explained above. The argument remains essentially the same. Namely, we take the family $g^T_\eps(t,x), \eps>0$ defined in Corollary \ref{c41} and observe that for every $\tau\in (0, T)$
$$\ba
\E^\P P_{\tau, \eps} f(X_{T-\tau})-\E^\P P_{T, \eps}f(X_0)&=\E^\P g_{\eps}^T (T-\tau, X_{T-\tau})-\E^\P g_{\eps}^T (0, X_{0})
\\&=\E^\P \int_0^{T-\tau}(\prt_t+L_x)g^T_\eps(t,X_t)\, dt\to 0, \quad \eps\to 0.
\ea
$$
This yields
$$
\E^\P P_{\tau} f(X_{T-\tau})=\E^\P P_{T}f(X_0), \quad \tau\in (0, T).
$$
Since $X$ is stochastically continuous and $P_t, t\geq 0$ is a strongly continuous family of operators, we can take $\tau\to 0$ and get
\be\label{onedim}
\E^\P f(X_{T})=\E^\P P_{T}f(X_0).
\ee
Since $T>0, f\in C_\infty$ in the above identity are arbitrary and $X_0$ has a prescribed law $\mu$ w.r.t. $\P$, this uniquely defines
 the one-dimensional distributions of $\P$ and hence completes the proof of the fact that the martingale problem $(L, C^2_\infty)$ in $\DD(\Re^+, \rd)$ is well posed.

The only assertion left unproved in  our main Theorem \ref{t1} and Theorem \ref{t2} is that the weak solution to \eqref{SDE} exists, and it is a Markov process with the strongly continuous semigroup in $C_\infty$ defined by \eqref{P_int}.

The existence of the weak solution follows by the standard approximation argument: approximate $b, \sigma$ by smooth $b^n, \sigma^n, n\geq 1$, consider the corresponding strong solutions $X^n, n\geq 1$, and prove that (a) the sequence $(X^n, Z^{(\alpha)})$  is weakly compact in $\DD(\Re^+, \rd\times \rd)$; (b) any weak limit point of this sequence is a weak solution to \eqref{SDE}. We refer to Section 5 in \cite{KK15} for a detailed exposition of this argument in a similar setting.

We have just proved that the solution to the martingale problem $(L, C^2_\infty)$ in $\DD(\Re^+, \rd\times \rd)$ both exists and unique, and corresponds to the unique weak solution
to \eqref{SDE}. Now we use the general fact that if a martingale problem in $\DD(\Re^+, \rd)$ is well posed, then (under a technical measurablity assumption, see Theorem 4.2.4 (c), \cite{EK86}, which now easily  follows by Theorem 4.2.6, \cite{EK86}) its solution $X$ is a strong Markov process for every $s<t$
\be\label{markov}
\P\Big(X_t\in A\Big|\mathcal{F}_s\Big)=\P_{X_s}(X_{t-s}\in A), \quad A\in \mathcal{B}(\rd);
\ee
here we denote by $\mathcal{F}_s, s\geq 0$ the natural filtration on $\DD(\Re^+, \rd)$ and by $\P_x, x\in \rd$ the solution to the martingale problem with the initial distribution $\mu(du)=\delta_x(du)$.  Denote $P_t(x,dy)=p_t(x,y)dy, t>0, P_0(x,dy)=\delta_0(dy),  x\in \rd$, and observe that by  \eqref{onedim} and \eqref{markov} applied to $\P$ with $\P(X_0=x)=1$, for any $f\in C_\infty$:
$$
\int_{\rd} f(y) P_t(x,dy)=\E^\P f(X_t)=\E^\P\int_{\rd}f(y)P_{t-s}(X_s,dy)=\int_{\rd}\int_{\rd} f(y) P_{t-s}(z,dy)P_s(x,dz).
$$
This proves the Chapman-Kolmogorov equality for $P_t(x, dy), t\geq 0$. Hence $P_t, t\geq 0$ is a semigroup  of operators in $C_\infty$, recall that we have already proved that it is strongly continuous. Applying \eqref{onedim} and \eqref{markov} once again we derive that $P_t(x,dy),t\geq 0$ is the transition probability density for $X$, and respectively $P_t, t \geq 0$ is the (Feller) semigroup corresponding to this Markov process, and in particular each operator $P_t, t\geq 0$ is non-negative. Because $\P_x(\DD(\Re^+, \rd\times \rd))=1, x\in \rd$, this semigroup is conservative.
\qed

\subsection{Concuding remark: other possible choices for $p_t^0(x,y)$}\label{choice}

Now when the proof is complete and its structure is clearly visible, we can discuss some other possibilities for the choice of the ``zero order approximation'' in \eqref{sol}. One such a choice is \eqref{p01} with $\kappa_t(y)$ equal to an exact solution to \eqref{backwardODE}. Under such a choice the formula \eqref{Phi_c} for $\Phi$ becomes even simpler: $b(t,\kappa_t(y))$ changes to $b(\kappa_t(y))$. All the subsequent calculation remain literally the same, with the definition of the kernels $H^{(\lambda)}_t(x,y)$ respectively changed. A subtle technical point here is that for a given $y$ exact solution to \eqref{backwardODE} is not unique and we have take to care about choosing the function $\kappa_t(y)$ in a measurable way. This difficulty however is not substantial and can be resolved by using a proper version of the Kuratovskii and Ryll-Nardzevski  measurable selection theorem, e.g. \cite{Stroock_Varad}, Theorem 12.1.10. Another non-trivial part here is to prove the sub-convolution property for the modified kernels $H^{(\lambda)}_t(x,y)$, but this can be made in a completely similar way as we did that for original kernels; here one should use  an easy consequence of \eqref{aflow} that the same two-sided inequality holds true for any pair of exact solutions  to \eqref{forwardODE} and \eqref{backwardODE}.

Similarly, for $k$ such that \eqref{211} holds one can take $\kappa_t(y)$ equal to the $k$-th Picard type approximation $\theta_{k,t}(y)$ for \eqref{backwardODE}. Under such a choice,  the measurability issues do not arise, and the same calculation as above can be made, with extra error terms with come from the difference between $\theta_{k,t}(y)$ and $\theta(y)$. Due to these terms the index $\zeta$ in the upper bound for the corresponding error term will be changed to $\min\{\zeta, \rho_k/(1-\gamma)\}$.

Hence, there is no visible difference between these two possibilities and the one we used above, if one is aimed only at the basic properties of the solution to \eqref{SDE} (weak existence and uniqueness and the estimates for the transition probability density itself). The situation is changed drastically when the \emph{sensitivity} of the transition probability density is studied: in Remark \ref{r51} below we indicate the point which makes our choice of $\kappa_t(y)$ apparently the only possible one to use in the study of the derivative $\prt_tp_t(x,y)$.

\section{Proof of Theorem~\ref{t3}}\label{s5}
The proof repeats the general lines of the proof of Theorem 2.6 \cite{KK15}. Hence here we only outline the main steps  of the proof and explain in details the key point which makes it possible both to improve the statement of  Theorem 2.6 \cite{KK15}; that is to  cover the case of $b(x)$ which is not Lipschitz continuous. We omit all the technicalities for which it is possible to refer to  analogous parts of the proof of Theorem 2.6 \cite{KK15}.

First,  it is a direct calculation to show that $p_t^0(x,y)$ defined by \eqref{p01} is continuously differentiable w.r.t. $t\in (0, \infty$ and    $$
   |\prt_tp_t^0(x,y)|\leq Ct^{-1/\alpha'}H_t^{(0)}(x,y),\quad  t\in (0, T], \quad x,y\in \Re^d
   $$
for any $T>0$; cf. Proposition 4.1 \cite{KK15}. Our goal is to extend these properties of $p_t^0(x,y)$ to similar properties of $p_t(x,y)$. We have the integral representation
\be\label{sol_2}
p_t(x,y)=p_t^0(x,y)+\int_0^t\int_{\Re^d}p_{t-s}^0(x,z)\Psi_s(z,y)\,dzds,
\ee
which is just the other form of \eqref{sol_1}; cf. \eqref{Psi}. Because of the non-integrable singularity $(t-s)^{-1/\alpha'}$ of the upper bounnd for the function $\prt_tp_{t-s}^0(x,z)$, we can not take $\prt_t$ at the right hand side of \eqref{sol_2} directly. Instead of that, we do the following standard trick (e.g. proof of Theorem 2.3 in \cite{KM02}). Rewrite \eqref{sol_2} in the following way:
 \be\label{46}\ba
p_t(x,y)&
=p_t^0(x,y)+\int_0^{t/2}\int_{\Re^d}p_{t-s}^0(x,z)\Psi_s(z,y)\,dzds+\int_{0}^{t/2}\int_{\Re^d}p_{s}^0(x,z)\Psi_{t-s}(z,y)\,dz\, ds,
\ea
\ee
then we avoid the singularity at $s=t$ related to $p_t^0(x,y)$, but instead we have to establish the differential properties of $\Psi$ with respect to $t$. This can be done in the same fashion with the basic parametrix method. Namely, we will study the differential properties of $\Phi$ and then extend them to $k$-fold convolutions $\Phi^{\star k}, k\geq 1$ and their sum $\Psi$. We explain in details the first step, which contains the most subtle point. In what follows we use the  notation from Section \ref{s41}.

\begin{lem}\label{deriv_Phi}  The function $\Phi_t(x,y)$ defined by (\ref{Phi})  has a continuous  derivative
$
\prt_t\Phi_t(x,y),
$ on $(0,\infty)\times\Re^d\times\Re^d$. In addition, for any $\chi\in (0, \eta\wedge \alpha)$ and $T>0$
   \begin{equation}\label{phider1}
   |\prt_t\Phi_t(x,y)|\leq C t^{-1/\alpha'}\Big(t^{-1+\chi/\alpha} Q_t^{(\chi)}+ (t^{-1+\chi} + t^{-1+\delta})Q_t^{(0)}(x,y)\Big),\quad  t\in(0, T], \quad x,y\in \Re^d.
   \end{equation}
\end{lem}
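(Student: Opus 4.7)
The plan is to differentiate the explicit representation \eqref{Phi_c} of $\Phi_t=\Phi_t^1+\Phi_t^2$ in $t$ and to bound each resulting summand by appropriate powers of $t$ times $Q_t^{(\chi)}$ or $Q_t^{(0)}$. Once this is carried out, the continuity of $\prt_t\Phi_t(x,y)$ on $(0,\infty)\times\rd\times\rd$ is automatic from the continuity of every summand in the explicit decomposition, so the real content of the lemma is the bound \eqref{phider1}.

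Before differentiating, I would complement \eqref{appr}--\eqref{appr_der} with a matching estimate for the time derivative of the mollification. Writing
$$\prt_t\bigl[t^{-d/\alpha}e^{-|z-x|^2/(2t^{2/\alpha})}\bigr]=\frac{1}{\alpha t}\Bigl(-d+\frac{|z-x|^2}{t^{2/\alpha}}\Bigr)t^{-d/\alpha}e^{-|z-x|^2/(2t^{2/\alpha})}$$
and using that the integral of this factor against a constant vanishes (by a Gaussian moment computation), one may insert $b(z)-b(x)$ under the integral and invoke the $\gamma$-H\"older continuity of $b$ to deduce
$$|\prt_tb(t,x)|\leq Ct^{-1+\gamma/\alpha},\qquad t\in(0,T],\ x\in\rd.$$
Combined with the bound that $\prt_t\kappa_t(y)=-b(t,\kappa_t(y))$ is uniformly bounded and with \eqref{appr_der}, the chain rule then yields
$$\Bigl|\tfrac{d}{dt}b(t,\kappa_t(y))\Bigr|\leq C\bigl(t^{-1+\gamma/\alpha}+t^{-1/\alpha+\gamma/\alpha}\bigr)\leq Ct^{\gamma/\alpha-1/\alpha'},$$
the factor $t^{-1/\alpha'}$ arising precisely from the worse of the two exponents $-1$ and $-1/\alpha$.

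Next, applying $\prt_t$ term by term to $\Phi_t^1$ and $\Phi_t^2$, three sources of singularity appear: (i) the power $t^{-d/\alpha-1}$ (respectively $t^{-(d+1)/\alpha}$) in front of the $g^{(\alpha)}$-factor contributes an extra $t^{-1}$; (ii) the normalizing $t^{1/\alpha}$ inside the argument of $g^{(\alpha)}$, differentiated together with the bounded increment $\prt_t\kappa_t(y)$, contributes a factor of order $t^{-1/\alpha}$; (iii) differentiating the $b$-increment inside $\Phi_t^2$ produces the factor $t^{\gamma/\alpha-1/\alpha'}$ computed above. Each contribution is a prefactor times one of $L^{(\alpha)}g^{(\alpha)}$, $\nabla g^{(\alpha)}$, $\nabla L^{(\alpha)}g^{(\alpha)}$, or $\nabla^2 g^{(\alpha)}$ evaluated at $(\kappa_t(y)-x)/(t^{1/\alpha}a^{1/\alpha}(y))$, and each of these derivatives is dominated by some $G^{(\lambda)}$ via Proposition~\ref{A1}.

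The final step is routine bookkeeping: absorb $|a(x)-a(y)|$ via \eqref{a_hol_in_c} and $|b(t,\kappa_t(y))-b(x)|$ via \eqref{appr_b}, move excess powers of $|\kappa_t(y)-x|/t^{1/\alpha}$ into $Q_t^{(\chi)}$ or $Q_t^{(0)}$ using \eqref{G3} and \eqref{1stder}, and collect the exponents of $t$. Every resulting term is bounded by $t^{-1/\alpha'}$ multiplied by one of the three summands on the right hand side of \eqref{Phi1}, which is exactly \eqref{phider1}. The main obstacle will be the estimate for $\prt_tb(t,\cdot)$ and the subsequent control of $\tfrac{d}{dt}b(t,\kappa_t(y))$: it is precisely here that the choice $\kappa_t=\theta_t^0$ (rather than an exact but non-smoothly parametrized solution of \eqref{backwardODE}) becomes essential, in line with the discussion at the end of Section~\ref{choice}.
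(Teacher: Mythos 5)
Your proposal is correct and follows essentially the same route as the paper: differentiate the explicit decomposition $\Phi_t=\Phi_t^1+\Phi_t^2$ from \eqref{Phi_c} term by term, derive the bound $|\prt_t b(t,x)|\leq Ct^{\gamma/\alpha-1}$ from the Gaussian mollifier (the paper does the same Gaussian-moment cancellation, merely more tersely, ``similarly to \eqref{appr_der}''), obtain $|\tfrac{d}{dt}b(t,\kappa_t(y))|\leq Ct^{\gamma/\alpha-1/\alpha'}$ by the chain rule, and then absorb the singular prefactors and the Hölder increments of $a$ and $b$ into $Q_t^{(\chi)}$ and $Q_t^{(0)}$ via Proposition~\ref{A1}, \eqref{a_hol_in_c}, \eqref{appr_b}, and \eqref{1stder}. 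You also correctly identify the role of the mollified flow $\kappa_t=\theta_t^0$, which is exactly the point made in Remark~\ref{r51}.
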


\begin{proof}  We take $\prt_t$ separately for $\Phi_t^1(x,y)$ and $\Phi_t^2(x,y)$; see (\ref{Phi_c}). Obviously, both these derivatives are well defined and continuous. Next, we have
$$\ba
|\prt_t\Phi_t^1(x,y)|&\leq C\Big|a(x)-a(y)\Big|\bigg\{{1\over t^{d/\alpha+2}}\Big|(L^{(\alpha)} g^{(\alpha)})\left({\kappa_t(y)-x\over t^{1/\alpha}a^{1/\alpha}(y)}\right)\Big|\\&\hspace*{1cm}+{1\over t^{d/\alpha+1+1/\alpha}}\Big|(\nabla L^{(\alpha)} g^{(\alpha)})\left({\kappa_t(y)-x\over t^{1/\alpha}a^{1/\alpha}(y)}\right)\Big||\prt_t\kappa_t(y)|\\&\hspace*{1cm}+{1\over t^{d/\alpha+2}}\Big|(\nabla L^{(\alpha)} g^{(\alpha)})\left({\kappa_t(y)-x\over t^{1/\alpha}a^{1/\alpha}(y)}\right)\Big|\left|{\kappa_t(y)-x\over t^{1/\alpha}}\right|\bigg\},
\ea
$$
 Now the estimates are similar to those which we used for $\Phi^1_t(x,y)$ itself. Recall that $\prt_t\kappa_t(y)=-b(t, \kappa_t(y))$, which is bounded. Then  by \eqref{G1}--\eqref{G3}, (\ref{g_a_frac}),  (\ref{g_a_frac_der}), and  (\ref{a_hol_in_c}) we get
\begin{align*}
|\prt_t\Phi_t^1(x,y)|&\leq C \Big|a(y)-a(x)\Big|
 \Big({1\over t^{d/\alpha+2}} +  {1\over t^{d/\alpha+1+1/\alpha}} \Big)G^{(\alpha)}\left({\kappa_t(y)-x\over t^{1/\alpha}}\right)\\
 &\leq C t^{-1/\alpha'}\Big( t^{-1+\chi/\alpha}Q_t^{(\chi)}(x,y)+t^{-1+\chi} Q_t^{(0)}(x,y)\Big).
\end{align*}
Next,
$$\ba
|\prt_t\Phi_t^2(x,y)|&\leq Ct^{-d/\alpha-1/\alpha-1}\Big|b(t,\kappa_t(y))-b(x)\Big|\Big|(\nabla g^{(\alpha)})\left({\kappa_t(y)-x\over t^{1/\alpha}a^{1/\alpha}(y)}\right)\Big|\\ &+Ct^{-d/\alpha-1/\alpha-1}\Big|b(t,\kappa_t(y))-b(x)\Big|\Big|(\nabla^2 g^{(\alpha)})\left({\kappa_t(y)-x\over t^{1/\alpha}a^{1/\alpha}(y)}\right)\Big|\left|{\kappa_t(y)-x\over t^{1/\alpha}}\right|\\
 &+Ct^{-d/\alpha-2/\alpha}\Big|b(t,\kappa_t(y))-b(x)\Big|\Big|(\nabla^2 g^{(\alpha)})\left({\kappa_t(y)-x\over t^{1/\alpha}a^{1/\alpha}(y)}\right)\Big||\prt_t\kappa_t(y)|
 \\&+ Ct^{-d/\alpha-1/\alpha}\Big|(\nabla g^{(\alpha)})\left({\kappa_t(y)-x\over t^{1/\alpha}a^{1/\alpha}(y)}\right)\Big|\Big|\prt_t\Big(b(t, \kappa_t(y))\Big)\Big|=:\sum_{j=1}^4 \Upsilon_t^{j}(x,y).
\ea
$$
By \eqref{appr_b} and \eqref{1stder}, we have
$$
\Upsilon_t^{1}(x,y)\leq Ct^{-2+\delta} Q_t^{(0)}(x,y).
$$
Next, similarly to \eqref{1stder} we have
$$
\left({|\kappa_t(y)-x|\over t^{1/\alpha}}+1\right)^2
 \left|(\nabla^2 g^{(\alpha)})\left({\kappa_t(y)-x\over t^{1/\alpha}a^{1/\alpha}(y)}\right)\right|\leq C G^{(\alpha)}\left({\kappa_t(y)-x\over t^{1/\alpha}}\right),
 $$
see Proposition \ref{A1}. Then using  \eqref{appr_b} and  the fact that $\prt_t\kappa_t(y)$ is bounded,  we get
$$
\Upsilon_t^{2}(x,y)\leq Ct^{-2+\delta} Q_t^{(0)}(x,y), \quad \Upsilon_t^{3}(x,y)\leq Ct^{-1-1/\alpha+\delta} Q_t^{(0)}(x,y).
$$
Finally, we have similarly to \eqref{appr_der}
$$
|\prt_t b(t,x)|\leq C t^{-d/\alpha}\int_{\rd}|z-x|^{\gamma}\left({|z-x|^2\over t^{2/\alpha+1}}+{1\over t}\right)e^{-|z-x|^2/2t^{2/\alpha}}\, dz\leq C t^{\gamma/\alpha-1}=Ct^{-2+\delta+1/\alpha}.
$$
Because
$$
\prt_t\Big(b(t, \kappa_t(y))\Big)=(\prt_tb)(t, \kappa_t(y))+\Big(\nabla b(t, \kappa_t(y)), \prt_t\kappa_t(y)\Big)
$$
and $\prt_t\kappa_t(y)$ is bounded, we get by \eqref{appr_der}
$$
\Big|\prt_t\Big(b(t, \kappa_t(y))\Big)\Big|\leq C\Big(t^{-2+\delta+1/\alpha}+t^{-1+\delta}\Big).
$$
Then by \eqref{G2}, \eqref{g_a_der},
$$
\Upsilon_t^{4}(x,y)\leq C\Big(t^{-2+\delta}+t^{-1+\delta-1/\alpha}\Big) Q_t^{(0)}(x,y).
$$
Summarizing the above estimates for $\Upsilon^j_t(x,y), j=1, \dots, 4$ we get finally
$$
|\prt_t\Phi_t^2(x,y)|\leq Ct^{-1/\alpha'}t^{1-\delta}Q_t^{(0)}(x,y).
$$
\end{proof}
\begin{rem}\label{r51} Now we can emphasize the only, but important point which makes our current choice of the term $\kappa_t(y)$ in the ``zero order approximation'' \eqref{p01} better than all the others discussed in Section \ref{choice} above. In the calculation which corresponds to $\Upsilon^j_t(x,y)$, the following term arise:
$$
\Big(\nabla b(t, \kappa_t(y)), \prt_t\kappa_t(y)\Big).
$$
If we take $\kappa_t(y)$ equal to an exact solution to \eqref{backwardODE}, a similar term has the form
$$
\Big(\nabla b(\kappa_t(y)), \prt_t\kappa_t(y)\Big),
$$
which is badly defined because $b$ is  assumed to be H\"older continuous, only. The same problem arises if one takes $\kappa_t(y)$ equal $\theta_{k,t}(y)$, the $k$-th Picard type iteration for \eqref{backwardODE}. This illustrates the point at which  the ``mollifying of the drift coefficient'' procedure, adopted in our construction, is particularly useful: it improves the smoothness property of the drift coefficient without loss of the accuracy of the method.
\end{rem}

Now the rest of the proof is completely similar to the proof of Theorem 2.6 \cite{KK15}, and we just outline the main steps. Clearly, we can weaken \eqref{phider1} by changing $Q^{(\chi)}, Q^{(0)}$ therein to $H^{(\chi)}, H^{(0)}$. We  re-arrange the formula for the $k$-fold convolution power of $\Phi$ in the form similar to \eqref{46}:
$$
\Phi^{\star (k)}_t(x,y)=\int_0^{t/2}\int_{\Re^d}\Phi_{t-s}^{\star (k-1)}(x,z)\Phi_s(z,y)\,dz ds+\int_0^{t/2}\int_{\Re^d}\Phi_{s}^{\star (k-1)}(x,z)\Phi_{t-s}(z,y)\,dz ds.
$$
Then by induction we show that  each $\Phi^{\star k}_t(x,y)$ is continuously differentiable in  $t$ and for each $T>0$ there exist $C_1, C_2$ such that for $t\in (0, T]$
$$
 |\prt_t\Phi^{\star k}_t(x,y)|\leq {C_1 C_2^k \over \Gamma(k\zeta)} t^{-1/\alpha'} t^{(k-1)\zeta}\Big( t^{-1+\chi/\alpha}H_t^{(\chi)}(x,y)+ (t^{-1+\chi}+t^{-1+\delta})H_t^{(0)}(x,y)\Big),\quad k\geq 1;
 $$
cf. Lemma 6.2 \cite{KK15}.  Then the sum $\Psi$ of these convolution powers possesses the same properties: it has a continuous derivative in $t$ and $$
 |\prt_t\Psi_t(x,y)|\leq C t^{-1/\alpha'} \Big( t^{-1+\chi/\alpha}H_t^{(\chi)}(x,y)+(t^{-1+\chi}+t^{-1+\delta})H_t^{(0)}(x,y)\Big),\quad t\in (0, T].
$$
This allows us to take a derivative $\prt_t$ at the right hand side of \eqref{46}, and to prove that there exists continuous derivative $\prt_tp_t(x,y)$ with
$$
|\prt_tp_t(x,y)-\prt_tp_t^0(x,y)|\leq Ct^{-1/\alpha'}\Big( t^{\chi/\alpha}(H^{(\chi)}\star H^0)_t(x,y)+
(t^{1+\chi}+t^{1+\delta})(H^{(0)}\star H^0)_t(x,y)\Big).
$$
Because $H^{(\chi)}\geq H^{(0)}$ and $H^{(\chi)}, H^{(0)}$ have the sub-convolution property, we get then
$$
|\prt_tp_t(x,y)-\prt_tp_t^0(x,y)|\leq Ct^{-1/\alpha'}\leq Ct^{-1/\alpha'}\Big( t^{\chi/\alpha}H^{(\chi)}_t(x,y)+
(t^{1+\chi}+t^{1+\delta})H^0_t(x,y)\Big),
$$
which after a simple re-arrangement yields an estimate on $|\prt_tp_t(x,y)-\prt_tp_t^0(x,y)|$ similar to the estimate for $\widetilde R_t(x,y)$ in the assertion of the theorem, with $\widetilde p_t(x,y)$ in the right hand side replaced by $p_t^0(x,y)$. Repeating the ``fine tuning'' argument from the Section \ref{s42}, we complete the proof. \qed

\textbf{Acknowledgement.}  The author thanks   R. Schilling, A.Kohatsu-Higa, and K.Bogdan   for inspiring discussions  and helpful remarks,  and A.Pilipenko for bringing the paper \cite{TTW74} to his attention. The DFG Grant Schi~419/8-1 and the joint DFFD-RFFI project No. 09-01-14
and gratefully acknowledged.

   \end{document}